\def\ol{\overline}
\def\e{\epsilon}
\def\lf{\left}
\def\ri{\right}
\def\a{{\alpha}}
\def\b{{\beta}}
\def\pp{\partial}
\newcommand\R{{\mathbb R}}
\def\tr{\tilde{R}_{ij}}
\def\H{\mathbb{H}}
\def\be{\begin{equation}}
\def\ee{\end{equation}}
\def\ol{\overline}
\def\lf{\left}
\def\ri{\right}
\def\a{{\alpha}}
\def\b{{\beta}}
\def\e{\epsilon}
\def\tr{\text{\rm tr}}
\def\tr{\text{\rm tr}}
\def\be{\begin{equation}}
\def\ee{\end{equation}}
\def\S{\mathbb{S}}
\newtheorem{thm}{Theorem}[section]
\newtheorem{lem}{Lemma}[section]
\newtheorem{prop}{Proposition}[section]
\newtheorem{cor}{Corollary}[section]
\theoremstyle{definition}
\newtheorem{dfn}{Definition}[section]
\theoremstyle{remark}
\newtheorem{rem}{Remark}
\numberwithin{equation}{section}
\newcommand{\ra}{\rightarrow}
\newcommand{\p}[2]{\frac{\partial #1}{\partial #2}}
\begin{document}

\title{limit of quasilocal mass integrals in asymptotically hyperbolic manifolds}
\author{Kwok-Kun Kwong and Luen-Fai Tam*}

\address{The Institute of Mathematical Sciences and Department of
 Mathematics, The Chinese University of Hong Kong,
Shatin, Hong Kong, P. R. China.} \email{kkkwong@math.cuhk.edu.hk}

\address{The Institute of Mathematical Sciences and Department of
 Mathematics, The Chinese University of Hong Kong,
Shatin, Hong Kong, P. R. China.} \email{lftam@math.cuhk.edu.hk}

\thanks{*~Research partially supported by Hong Kong RGC General Research Fund  \#CUHK 403108}

\renewcommand{\subjclassname}
{\textup{2000} Mathematics Subject Classification}
\subjclass[2000]{Primary 53C20; Secondary 83C99}
\date{October 2010}
\keywords{Quasilocal mass integral, asymptotically hyperbolic manifolds, isometric embedding.}

\begin{abstract}
In this paper, we will show that the limit of some quasilocal mass integrals  of the coordinate spheres in an asymptotically hyperbolic (AH) manifold is the mass integral of the AH manifold. This is the analogue of the well known result that the limit of the Brown-York mass of coordinate spheres is the ADM mass in an asymptotically flat manifold.
\end{abstract}
\maketitle
\markboth{}{}

\section{Introduction}

It is known that in an asymptotically flat manifold, the Brown-York quasilocal mass of the coordinate spheres will converge to the ADM mass of the manifold  \cite{fst}, see also \cite{ShiWangWu09,FanKwong}. In this work, we will investigate if there is a corresponding result for asymptotically hyperbolic (AH) manifolds. First we give the meanings of mass of an AH manifold and quasilocal mass. In this work, all manifolds are assumed to be connected and orientable.

We will follow X. D. Wang \cite{Wang} to define asymptotically hyperbolic manifolds as follows:
\begin{dfn}\label{def: g}
 A complete noncompact Riemannian manifold $(M^n,g)$ is said to be asymptotically hyperbolic (AH) if $M$ is the interior of a compact manifold $\ol M$ with boundary $\pp \ol M$ such that:
  \begin{enumerate}
     \item [(i)] there is a smooth function $r$ on $\ol M$ with   $r>0$ on $M$ and $r=0$ on $\pp\ol M$ such that $\ol g=r^2g$ extends as a smooth Riemannian metric on $\ol M$;
     \item [(ii)] $|dr|_{\ol g}=1$ on $\pp \ol M$;
     \item [(iii)] $\pp\ol M$ is  the standard unit sphere $\S^{n-1}$;
         \item[(iv)]  on a collar neighborhood of $\partial \ol M$,
$$g=\sinh^{-2}(r)(dr^2+g_r),$$ with $g_r$ being an $r$-dependent family of metrics on $\mathbb{S}^{n-1}$  satisfying
$$g_r=g_0+\frac{r^n}{n}h+e,$$
  where   $g_0$ is the standard metric, $h$ is a smooth symmetric 2-tensor on $\mathbb{S}^{n-1}$ and $e$ is of order $O(r^{n+1})$, and the asymptotic expansion can be differentiated twice.
   \end{enumerate}
\end{dfn}
Note that the definition is not as general as that in \cite{ChruscielHerzlich}, see also \cite{Zhang2004}. In \cite{Wang}, the following positive mass theorem was proved by Wang (see also \cite{AD,ChruscielHerzlich,Zhang2004})
\begin{thm}\cite[Theorem 2.5]{Wang} \label{thm: wang}
If $(M^n, g)$ is spin, asymptotically hyperbolic and the scalar curvature $R\geq -n(n-1)$, then
$$\int_{\mathbb{S}^{n-1}}\tr_{g_0}(h)d\mu_{g_0}\geq \left|\int_{\mathbb{S}^{n-1}}\tr_{g_0}(h)xd\mu_{g_0}\right|.$$
Moreover equality holds if and only if $(M, g)$ is isometric to the hyperbolic
space $\mathbb{H}^n$.
\end{thm}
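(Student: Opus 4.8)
The plan is to run a Witten-type spinor argument adapted to the hyperbolic end, in the spirit of \cite{Wang,ChruscielHerzlich,Zhang2004}. The starting point is the model: on $\H^n$ there is a $2^{[n/2]}$-dimensional space $\mathcal{K}$ of \emph{imaginary Killing spinors} $\phi_0$, characterized by $\n_X\phi_0=\tfrac{\ii}{2}X\cdot\phi_0$ for all $X$, equivalently $\tn_X\phi_0=0$ for the modified spin connection $\tn_X:=\n_X-\tfrac{\ii}{2}X\cdot$, with associated modified Dirac operator $\wt D:=D-\tfrac{\ii n}{2}$. Conditions (iii) and (iv) of Definition \ref{def: g} guarantee that the conformal infinity is the round $\S^{n-1}$, so each $\phi_0\in\mathcal{K}$ gives, via the collar $g=\sinh^{-2}(r)(dr^2+g_r)$ with $g_r\to g_0$, an ``asymptotic Killing spinor'' on the end of $M$; $(M,g)$ is spin by hypothesis. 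The two integrals in the statement assemble into a single energy--momentum vector $P=(P_0,\vec P)\in\R^{1,n}$ with $P_0=\int_{\S^{n-1}}\tr_{g_0}(h)\,d\mu_{g_0}$ and $\vec P=\int_{\S^{n-1}}\tr_{g_0}(h)\,x\,d\mu_{g_0}$, and the claim is exactly that $P$ lies in the closed future light cone, $P_0\ge|\vec P|$, with equality iff $P$ is null (or zero) and $M$ is hyperbolic.

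Next comes the analytic heart of the matter. Given $\phi_0\in\mathcal{K}$, extend it smoothly into $M$ and solve the boundary value problem $\wt D\phi=0$ on $M$ with $\phi-\phi_0\to0$ at infinity in a weighted sense. Since $M$ is spin and $\wt D$ differs from the Dirac operator $D$ by a zeroth-order term, on the AH end $\wt D$ is Fredholm of index zero between suitable weighted Sobolev spaces, with kernel and cokernel controlled by the indicial roots of the hyperbolic model; the decay rate $r^n$ built into Definition \ref{def: g} is precisely what places $\phi_0$ at a nonexceptional weight and lets one correct it to a solution with $\phi-\phi_0$ of higher order. One then integrates the Bochner--Lichnerowicz--Weitzenb\"ock identity
\be
\wt D^2=\tn^*\tn+\tfrac14\bigl(R+n(n-1)\bigr)
\ee
over $\{r>\e\}$, uses $\wt D\phi=0$ and $R\ge-n(n-1)$ so that the bulk integrand is nonnegative, and, letting $\e\to0$, obtains
\be
\int_M\Bigl(|\tn\phi|^2+\tfrac14\bigl(R+n(n-1)\bigr)|\phi|^2\Bigr)\,d\mu_g=\lim_{\e\to0}\int_{\{r=\e\}}\re\bigl\langle\phi,\,(\n_\nu+\ii\,\nu\cdot)\phi\bigr\rangle\,d\mu\ \ge\ 0,
\ee
where $\nu$ is the outward unit normal of $\{r=\e\}$.

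The third step is to evaluate that boundary limit. Substituting the expansion $g_r=g_0+\tfrac{r^n}{n}h+O(r^{n+1})$ (which may be differentiated twice, as assumed) and the asymptotics of $\phi$ into the flux integral, all lower-order terms cancel or vanish in the limit, and what survives is $c_n\langle P,Q(\phi_0)\rangle$ for a dimensional constant $c_n>0$, where $\langle\cdot,\cdot\rangle$ is the Minkowski product and $Q(\phi_0)=(Q_0(\phi_0),\vec Q(\phi_0))\in\R^{1,n}$ is the future-directed causal vector whose components are the $L^2(\S^{n-1})$-pairings of the boundary value of $\phi_0$ against $1,x^1,\dots,x^n$ times itself. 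Thus $\langle P,Q(\phi_0)\rangle\ge0$ for every $\phi_0\in\mathcal{K}$. As $\phi_0$ ranges over $\mathcal{K}$ the vectors $Q(\phi_0)$ sweep out all positive multiples of future-directed null vectors in $\R^{1,n}$; since a vector pairing nonnegatively (in the Minkowski sense) with every future null vector must itself be future-causal, we conclude $P_0\ge|\vec P|$, which is the asserted inequality.

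Finally, rigidity. If equality holds, $P$ is null or zero, so there is $\phi_0\in\mathcal{K}$, $\phi_0\not\equiv0$, with $\langle P,Q(\phi_0)\rangle=0$; then the identity above forces $\tn\phi\equiv0$ on $M$ and $(R+n(n-1))|\phi|^2\equiv0$. An imaginary Killing spinor obeys $X\,|\phi|^2=\re\langle\ii\,X\cdot\phi,\phi\rangle$, so $|\phi|^2$ satisfies a first-order ODE along geodesics and never vanishes, whence $R\equiv-n(n-1)$ on $M$. Repeating the construction for a basis of $\mathcal{K}$ produces a full $2^{[n/2]}$-dimensional space of imaginary Killing spinors on $(M,g)$; by the classification of complete Riemannian manifolds carrying the maximal number of imaginary Killing spinors (Baum), $(M,g)$ is isometric to $\H^n$. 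Conversely $\H^n$ has $h\equiv0$ and both integrals vanish. I expect the analytic step---solvability of $\wt D\phi=0$ with the prescribed asymptotics, i.e. surjectivity of the modified Dirac operator on the weighted spaces dictated by Definition \ref{def: g}, together with enough decay of $\phi-\phi_0$ that the cross terms in the flux genuinely die in the limit---to be the main obstacle, with the precise flux computation that produces $c_n\langle P,Q(\phi_0)\rangle$ with $c_n>0$ the other point requiring care.
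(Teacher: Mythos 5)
This theorem is not proved in the paper at all: it is imported verbatim from X.~D.~Wang \cite[Theorem 2.5]{Wang} (see also \cite{AD,ChruscielHerzlich,Zhang2004}), so there is no in-paper argument to compare yours against. Your outline reproduces the strategy of the cited source: imaginary Killing spinors of the hyperbolic model, the modified connection $\widetilde\nabla_X=\nabla_X-\tfrac{\sqrt{-1}}{2}X\cdot$ and Dirac operator, the Lichnerowicz-type identity with potential $\tfrac14(R+n(n-1))$, solvability of the modified Dirac equation on weighted spaces, and the boundary flux identifying the mass functional as a Minkowski pairing against the null charges $Q(\phi_0)$. As a sketch of that argument it is sound, and you correctly flag the Fredholm/decay analysis and the precise flux computation as the places where the real work lies.

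There is, however, a genuine gap in your rigidity step. If equality holds with $P\neq 0$, then $P$ is a nonzero future null vector, and $\langle P,Q(\phi_0)\rangle=0$ holds only for those $\phi_0$ whose charge $Q(\phi_0)$ lies on the single null ray through $P$ (since for future null $P,Q$ one has $\langle P,Q\rangle=0$ iff $Q\parallel P$). This is a proper subspace of $\mathcal{K}$, so ``repeating the construction for a basis of $\mathcal{K}$'' does not produce the maximal number of imaginary Killing spinors on $M$, and Baum's classification cannot be invoked as you state it. The actual equality analysis must work with this partial family: one uses Baum's structure theory for manifolds admitting a non-maximal space of imaginary Killing spinors (warped products over manifolds with parallel spinors) and then rules out the strictly null case $P\neq0$ by confronting that structure with the prescribed asymptotics, concluding $P=0$ and only then that $(M,g)\cong\mathbb{H}^n$. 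As written, your paragraph proves rigidity only under the additional hypothesis that both integrals vanish.
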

We only consider the case that $n=3$, the theorem implies that if $M$ is not isometric to the hyperbolic space, then the vector
$$
\Upsilon=\left(\int_{\mathbb{S}^{n-1}}\tr_{g_0}(h)d\mu_{g_0}, \int_{\mathbb{S}^{n-1}}\tr_{g_0}(h)xd\mu_{g_0}\right)
$$
is a future directed timelike vector in $\R^{3,1}$, the Minkowski space. We may consider $\Upsilon$ as the mass integral for the AH manifold.

We introduce the following quasilocal mass integral for a compact manifold with boundary, similar to the Brown-York mass. Let $(\Omega,g)$ be a three dimensional compact manifold with smooth boundary $\Sigma$. Assume $\Sigma$ is homeomorphic to the standard sphere $\S^2$ such that the mean curvature of $\Sigma$ is positive and the Gaussian curvature of $\Sigma$ is larger than $-1$. Then $\Sigma$ can be isometrically embedded into the hyperbolic space $\H^3$ by a result of Pogorelov \cite{P} and the embedding is unique up to an isometry of $\H^3$. Consider $\H^3$ as the hyperboloid in $\R^{3,1}$
$$\mathbb{H}^3=\{(x^0, x^1, x^2, x^3)\in \R^{3,1}: (x^0)^2-\sum_{i=1}^3 (x^i)^2=1, x^0>0\}.$$
Then the quasilocal mass integral of $\Omega$ is defined as:
$$
\int_{\Sigma}(H_0-H)X
$$
where $H_0$ is the mean curvature of $\Sigma$ in $\mathbb{H}^3$ and $X$ is the position vector in $\R^{3,1}$.

The motivation of this definition is as follows. In \cite{WangYau2007}, M. T. Wang and Yau proved that if the scalar   curvature of $\Omega$ satisfies $R\ge -6$, then there is a future time like vector $W$ such that
$$
\int_{\Sigma}(H_0-H)W
$$
is a future directed non-spacelike vector. $W$ is obtained by solving a backward parabolic equation with a prescribed data at infinity and is not very explicit. Later in \cite{ShiTam2007}, Shi and the second author prove that if $B_o(R_1)$ and $B_o(R_2)$ are two geodesic balls in $\H^3$ such that $B_o(R_1)$ is contained in the interior of $\Sigma$ in $\H^3$ and $\Sigma$ is contained in $B_o(R_2)$,  where $o=(1,0,0,0)\in \H^3\subset \R^{3,1}$, then the result of Wang-Yau is true for  $W(x^0,x^1,x^2,x^3)=(\a x^0,x^1,x^2,x^3)$ with
$$
\a=\coth R_1+\frac1{\sinh R_1}\lf(\frac{\sinh^2R_2}{\sinh^2R_1}-1\ri)^\frac12.
$$
Hence $W$ is close to the position vector. It is an open question whether $W$ can be chosen to be the position vector.

In this work, we consider AH manifolds with the following condition  (with the notations as in Definition \ref{def: g}):

 {\bf Assumption A}:   $\nabla_{\mathbb{S}^{n-1}}e, \nabla^2_{\mathbb{S}^{n-1}}e, \nabla^3_{\mathbb{S}^{n-1}}e, \nabla^4_{\mathbb{S}^{n-1}}e$ with respect to $g_0$ and $\p{e}{r}$ are of order $O(r^n)$.

Let $S_a=\{r=a\}\subset(M, g)$ and let $H$ to be its mean curvature. We identify $S_r$ as the standard sphere $\S^2$ with metric $\gamma_r$ induced from $g$. Then for $r$ small, the Gaussian curvature of $(S_r,\gamma_r)$ is positive where $\gamma_r$ is the induced metric of $g$.

Our main result is the following:
\begin{thm}\label{thm: main}
Let (M, g) be a three-dimensional asymptotically hyperbolic manifold satisfying Assumption A. For all $r$  sufficiently small,  there exists an isometric embedding $X^{(r)}: S_r\ra \mathbb{H}^3\subset\R^{3,1}$ such that
$$
\lim_{r\to0}\int_{S_r}(H_0-H) X^{(r)} d\mu_{\gamma_r}=\frac{1}{2}\left(\int_{\mathbb{S}^{2}}\tr_{g_0}(h)d\mu_{g_0}, \int_{\mathbb{S}^{2}}\tr_{g_0}(h)xd\mu_{g_0}\right)
$$
where  $H_0$ is the mean curvature of $X^{(r)}(S_r)$ in $\H^3$.
\end{thm}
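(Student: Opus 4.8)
The plan is to compare the isometrically embedded surface $X^{(r)}(S_r)$ with a suitable geodesic sphere in $\H^3$ and to expand $H$, $H_0$, $X^{(r)}$ and $d\mu_{\gamma_r}$ in $r$. First I would compute the mean curvature $H$ of $S_r$ in $(M,g)$. Introducing the arclength coordinate $t$ with $dt=-dr/\sinh r$ (so $t\to\infty$ as $r\to0$), one has near infinity $g=dt^2+G_t$ with $G_t=\sinh^{-2}(r)g_r$, and the mean curvature with respect to the outward normal $\partial_t$ is $H=\frac12\tr_{G_t}(\partial_t G_t)$. A direct computation gives $H=2\cosh r-\frac12\sinh r\,\tr_{g_r}(\partial_r g_r)$, and since $\partial_r g_r=r^2h+\partial_r e$ with $\partial_r e=O(r^3)$ and $g_r^{ij}=g_0^{ij}+O(r^3)$ this yields
$$H=2\cosh r-\tfrac12 r^3\,\tr_{g_0}(h)+O(r^4).$$
Likewise, from $\gamma_r=\sinh^{-2}(r)g_r=\sinh^{-2}(r)\bigl(g_0+\tfrac{r^3}{3}h+e\bigr)$ one gets $d\mu_{\gamma_r}=\sinh^{-2}(r)\bigl(1+\tfrac{r^3}{6}\tr_{g_0}(h)+O(r^4)\bigr)\,d\mu_{g_0}$.

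Next I would pin down the embedding. The Gaussian curvature of $\gamma_r$ is $\sinh^2(r)\bigl(1+O(r^3)\bigr)>0$ for $r$ small, so Pogorelov's theorem provides an isometric embedding $X^{(r)}:S_r\to\H^3$, unique up to an isometry of $\H^3$. Let $\Sigma_0=\Sigma_0(r)$ be the geodesic sphere of $\H^3$ centered at $o=(1,0,0,0)$ of radius $\rho_0$ with $\sinh\rho_0=\sinh^{-1}(r)$; then $\cosh\rho_0=\cosh r/\sinh r$, $\coth\rho_0=\cosh r$, the induced metric of $\Sigma_0$ is exactly $\sinh^{-2}(r)g_0$, its mean curvature is $2\coth\rho_0=2\cosh r$, and its position vector in $\R^{3,1}$ is $X_0=\sinh^{-1}(r)(\cosh r,x)$ with $x:\S^2\to\R^3$ the standard embedding. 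Thus $\gamma_r$ differs from the induced metric of $\Sigma_0$ by $\sinh^{-2}(r)\bigl(\tfrac{r^3}{3}h+e\bigr)$, a perturbation of relative size $O(r^3)$ that, by Assumption A, is controlled in $C^4$. The key claim is that $X^{(r)}$ can be chosen so that $X^{(r)}=X_0+O(r^2)$ in $\R^{3,1}$ and the mean curvature $H_0$ of $X^{(r)}(S_r)$ in $\H^3$ satisfies $H_0=2\cosh r+o(r^3)$.

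To prove the claim I would linearize the isometric embedding operator at $\Sigma_0$: a normal variation $\psi\nu_0$ changes the induced metric by the pure-trace tensor $2\psi\coth\rho_0\,\gamma_0$ and a tangential variation $V$ by $\mathcal{L}_V\gamma_0$. Since $\S^2$ carries no nonzero transverse traceless symmetric $2$-tensor, $V\mapsto(\mathcal{L}_V\gamma_0)^{\mathrm{TF}}$ surjects onto trace-free tensors, so $(\psi,V)\mapsto 2\psi\coth\rho_0\,\gamma_0+\mathcal{L}_V\gamma_0$ is surjective onto symmetric $2$-tensors, with kernel the $6$-dimensional space of conformal Killing fields of $\S^2$ (the Lie algebra of $\mathrm{Isom}(\H^3)$, matching the Pogorelov uniqueness). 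Solving the linearized equation against $\gamma_r-\gamma_0=O(r^3)$ and iterating (implicit function theorem based at $\Sigma_0$, using uniform ellipticity of the conformal Killing operator on $\S^2$ and the $C^4$ bounds of Assumption A) yields $\psi=O(r^3)$ and $V=O(r^3)$ in $C^2$, hence $X^{(r)}=X_0+O(r^2)$. The tangential part leaves $H_0$ unchanged ($H_0$ is constant on $\Sigma_0$), while the normal part changes it by $\pm\bigl(\Delta_{\gamma_0}+|\mathrm{II}_0|^2+\Ric_{\H^3}(\nu_0)\bigr)\psi=\pm\sinh^2(r)(\Delta_{g_0}+2)\psi=O(r^5)$, whence $H_0=2\cosh r+O(r^5)$. (Alternatively, rescaling the metric of $\H^3$ by $\sinh^2 r$ makes $\Sigma_0$ the unit sphere of a space of curvature $-\sinh^2 r\to0$ and turns the problem into the standard embedding $(\S^2,g_0)\hookrightarrow\R^3$, and one invokes smooth dependence of the isometric embedding on the metric.)

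Finally I would combine the pieces: $H_0-H=\tfrac12 r^3\,\tr_{g_0}(h)+O(r^4)$ and $X^{(r)}\,d\mu_{\gamma_r}=\dfrac{\cosh r}{\sinh^3 r}\bigl(1,\tfrac{x}{\cosh r}\bigr)\,d\mu_{g_0}+O(1)\,d\mu_{g_0}$, so that
$$(H_0-H)\,X^{(r)}\,d\mu_{\gamma_r}=\frac12\cdot\frac{r^3\cosh r}{\sinh^3 r}\,\tr_{g_0}(h)\,\Bigl(1,\frac{x}{\cosh r}\Bigr)\,d\mu_{g_0}+O(r)\,d\mu_{g_0};$$
since $\dfrac{r^3\cosh r}{\sinh^3 r}\to1$ and $\dfrac{x}{\cosh r}\to x$ uniformly on $\S^2$, integrating and letting $r\to0$ gives $\tfrac12\bigl(\int_{\S^2}\tr_{g_0}(h)\,d\mu_{g_0},\ \int_{\S^2}\tr_{g_0}(h)\,x\,d\mu_{g_0}\bigr)$, as asserted. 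The hard part is establishing, with estimates uniform in $r$, that the Pogorelov embedding stays $C^2$-close to $\Sigma_0(r)$ with the precise rates $X^{(r)}=X_0+O(r^2)$ and $H_0=2\cosh r+o(r^3)$; this is where the structure of the linearized isometric embedding on $\S^2$ and the higher regularity of Assumption A are essential, while the other steps are bookkeeping of the above expansions.
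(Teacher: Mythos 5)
Your overall architecture matches the paper's: expand $H$, $H_0$, the position vector and the area form in $r$, and observe that only the leading terms survive in the limit. Your computations of $H$, of $d\mu_{\gamma_r}$, and the final bookkeeping agree with the paper. Where you genuinely diverge is in the control of the embedding and of $H_0$, which is the heart of the matter. You propose to construct $X^{(r)}$ perturbatively from the model geodesic sphere $\Sigma_0(r)$ by linearizing the isometric embedding system and invoking an implicit function theorem, extracting $H_0=2\cosh r+O(r^5)$ and $X^{(r)}=X_0+O(r^2)$ from the resulting bounds $\psi, V=O(r^3)$. The paper instead takes the Pogorelov embedding as a black box and controls it a posteriori by three separate devices: (i) an intrinsic a priori upper bound for the mean curvature of a convex hypersurface in $\H^n$ in terms of $R$ and $\Delta R$ (a hyperbolic version of Li--Weinstein), which combined with the Gauss-equation lower bound $2(R+2)\le H_0^2$ gives $H_0=2\cosh r+O(r^5)$ with no information about where the surface sits; (ii) rolling-sphere theorems (Howard's theorem plus a Schur-type comparison for convex curves in $\H^2$) to trap $\Sigma_r$ between concentric geodesic spheres of radii $\sigma\pm Cr^3$; and (iii) a synthetic comparison of intrinsic distances to show $y^{(r)}(x)\to x$ uniformly after an $O(3)$ rotation. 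Note that the paper needs only $y^{(r)}(x)=x+o(1)$ and $\sinh\sigma^{(r)}=\frac{1}{\sinh r}+O(r^2)$, which are much weaker than the rates you claim.

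The one place your argument is not yet a proof is exactly the step you flag: the implicit function theorem at $\Sigma_0(r)$ with estimates uniform as $r\to0$. The base sphere has radius $\rho_0\to\infty$ and its geometry degenerates, so you must show that the linearized isometric embedding operator is invertible modulo its $6$-dimensional kernel with inverse bounded uniformly in $r$ in suitably scaled norms, and that the quadratic remainder is controlled; this is essentially the openness step of Nirenberg's solution of the Weyl problem carried out with a parameter, and the first-variation formula only yields $H_0=2\cosh r+O(r^5)$ if $\psi=O(r^3)$ holds in $C^2$, so the regularity of the solution must be tracked as well. Your rescaling remark (curvature $-\sinh^2 r\to0$, limit problem the round $\S^2\subset\R^3$) is the right idea for obtaining uniformity, but ``smooth dependence of the isometric embedding on the metric'' is itself a theorem requiring the elliptic estimates you are eliding. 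None of this is wrong, but it is the entire difficulty, and the paper's intrinsic $H_0$ estimate together with the inscribed/circumscribed sphere argument exists precisely to sidestep it. To complete your route you should either carry out the uniform IFT in detail or substitute the paper's combination of the Li--Weinstein-type bound and the convexity arguments for that step.
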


\begin{rem}\label{rem}
From the proof of Theorem \ref{thm: main}, $X^{(r)}$ in the theorem can be chosen by applying an isometry of $\mathbb{H}^3$ fixing $o$ (i.e. $O(3)$) on $\tilde X^{(r)}$, where $\tilde X^{(r)}$ is an embedding of $S_r$ (for small $r$) such that $o$ is the center of a largest geodesic sphere contained in the interior of $\tilde X^{(r)}(S_r)$ (or a smallest geodesic sphere containing $\tilde X^{(r)}(S_r)$ in its interior).
\end{rem}

By applying Theorem \ref{thm: wang} to our result, we have
\begin{cor}\label{cor: main}
Let $(M, g)$ be a three-dimensional asymptotically hyperbolic manifold satisfying Assumption A with the scalar curvature $R\geq -6$, if $Y^{(r)}: S_r\ra \mathbb{H}^3\subset\R^{3,1}$ is an isometric embedding such that
$o$ is the center of a largest geodesic sphere contained in the interior of $Y^{(r)}(S_r)$ (or a smallest geodesic sphere containing $Y^{(r)}(S_r)$ in its interior), then for sufficiently small $r$, the vector
$$\int_{S_r}(H_0-H) Y^{(r)} d\mu_{\gamma_r}$$
is either zero or is future-directed timelike. If $(M, g)$ is not isometric to $\mathbb{H}^3$, then this vector is always non-zero for sufficiently small $r$.
\end{cor}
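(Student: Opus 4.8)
The plan is to combine Theorem \ref{thm: main} with Wang's inequality (Theorem \ref{thm: wang}) and the openness of the future timelike cone. First I would reduce the arbitrary admissible embedding $Y^{(r)}$ to the embedding $X^{(r)}$ of Theorem \ref{thm: main}. By the Remark, $X^{(r)}$ arises from $\tilde X^{(r)}$ through an element of $O(3)$, which fixes $o$ and carries geodesic spheres to geodesic spheres; hence $X^{(r)}$ itself has $o$ as the center of its largest inscribed (smallest circumscribed) geodesic sphere, so $X^{(r)}$ is a legitimate choice of $Y^{(r)}$. For a general admissible $Y^{(r)}$, Pogorelov uniqueness gives an isometry $\Phi_r$ of $\H^3$ with $Y^{(r)}=\Phi_r\circ X^{(r)}$; comparing the distinguished centers (the largest inscribed sphere of $\Phi_r(X^{(r)}(S_r))$ is centered at $\Phi_r(o)$) forces $\Phi_r(o)=o$, i.e.\ $\Phi_r=A_r\in O(3)$. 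Since $A_r$ fixes the time axis of $\R^{3,1}$ and acts orthogonally on the spatial slice, it preserves both the Minkowski quadratic form and the time orientation, and $\int_{S_r}(H_0-H)Y^{(r)}=A_r\int_{S_r}(H_0-H)X^{(r)}$. Therefore the integral vector for $Y^{(r)}$ is zero, future-directed timelike, or nonzero precisely when the one for $X^{(r)}$ is, and it suffices to treat $X^{(r)}$.

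Next I would apply Theorem \ref{thm: main} to obtain $\int_{S_r}(H_0-H)X^{(r)}\,d\mu_{\gamma_r}\to\frac12\Upsilon$ as $r\to0$, with $\Upsilon=\left(\int_{\S^2}\tr_{g_0}(h)\,d\mu_{g_0},\ \int_{\S^2}\tr_{g_0}(h)\,x\,d\mu_{g_0}\right)$. As $n=3$ and $R\ge-6=-n(n-1)$, Theorem \ref{thm: wang} gives $\int_{\S^2}\tr_{g_0}(h)\,d\mu_{g_0}\ge|\int_{\S^2}\tr_{g_0}(h)\,x\,d\mu_{g_0}|$, so the time component of $\frac12\Upsilon$ dominates the norm of its spatial part; that is, $\frac12\Upsilon$ lies in the closed future cone $\{(v^0,\vec v):v^0\ge|\vec v|\}$, with equality exactly when $(M,g)$ is isometric to $\H^3$.

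The conclusion then splits according to this rigidity. If $(M,g)$ is not isometric to $\H^3$, the inequality is strict, so $\frac12\Upsilon$ has $v^0>|\vec v|\ge0$, hence $v^0>0$, placing $\frac12\Upsilon$ in the open future timelike cone $\mathcal C=\{v^0>|\vec v|\}$. Because $\mathcal C$ is open and the integral vectors for $X^{(r)}$ converge to $\frac12\Upsilon\in\mathcal C$, there is $r_0>0$ with $\int_{S_r}(H_0-H)X^{(r)}\in\mathcal C$ for every $r<r_0$; by the reduction above the same holds for $Y^{(r)}$, which simultaneously yields the future-directed timelike property and the nonvanishing for small $r$. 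If instead $(M,g)$ is isometric to $\H^3$, then in the collar coordinates of Definition \ref{def: g} the metric is the exact hyperbolic metric, whose coordinate spheres $S_r$ are concentric geodesic spheres; their isometric images in $\H^3$ are geodesic spheres, on which $H_0=H$ pointwise, so the integral vector vanishes identically. Combining the two cases gives the first assertion (the vector is either zero or future-directed timelike for small $r$), and the non-hyperbolic case gives the second.

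The crux is passing from the limit in Theorem \ref{thm: main} to a statement at each small fixed $r$, and this is precisely where the strict form of Wang's inequality (equivalently, the exclusion of the rigidity case) is indispensable, via the openness of $\mathcal C$. The remaining delicate point is the equality case: one must verify that the vector is genuinely \emph{zero} rather than merely asymptotically null, which I would establish directly from the vanishing of $H_0-H$ on the geodesic coordinate spheres of the hyperbolic metric rather than from the limit alone.
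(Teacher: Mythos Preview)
Your proof is correct and follows the paper's approach: reduce $Y^{(r)}$ to the normalized embedding $X^{(r)}$ via an element of $O(3)$ (the paper relegates this step to Remark \ref{rem} and an ``it is easy to see''), then combine Theorem \ref{thm: main} with the strict form of Wang's inequality and the openness of the future timelike cone. You are in fact more careful than the paper in treating the case $(M,g)\cong\H^3$ explicitly; note, however, that $H_0=H$ there follows immediately from Pogorelov uniqueness (since $S_r$ already sits in a space isometric to $\H^3$), so you need not assert that the coordinate spheres are geodesic spheres.
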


This paper is organized as follows. In Section \ref{sec: curv}, we will establish some estimates for the various curvatures of $S_r$ and its embedding in the hyperbolic space. In Section \ref{sec: geom}, we will describe some basic results in hyperbolic geometry concerning the radii of the smallest geodesic sphere enclosing a given convex surface and of   the largest geodesic sphere enclosed by it. In Section \ref{sec: emb}, we will normalize the isometric embedding of $S_r$ into the hyperbolic space so that the image of the isometric embedding of $S_r$ is close to a geodesic sphere in the hyperbolic space. We then prove the main results in Section \ref{sec: main results}.

{\sc Acknowledgments}:
The first author would like to thank Chit-Yu Ng and the second author would like to thank Ralph Howard, Yuguang Shi and Andrejs Treibergs for useful and stimulating discussions.

\section{Curvature estimates}\label{sec: curv}

In this section, we always assume $(M^3,g)$ is a three dimensional AH manifold as  in Definition \ref{def: g} such that {Assumption A} is satisfied. Using the notations in Definition \ref{def: g}, let $S_a=\{r=a\}\subset M$. We want to obtain some curvature estimates for $S_r$ which will be used in the proof of the main result. First we will estimate the intrinsic scalar curvature $R$ which is twice the Gaussian curvature  of $S_r$ with the metric $\gamma_r$ induced by $g$.

\begin{lem}\label{lem: R}
The scalar curvature $R$ of $S_r$ with respect to the induced metric from $g$ is given by
$$R=2\sinh^2 r+O(r^5).$$
\end{lem}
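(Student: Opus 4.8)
The plan is to compute the induced metric $\gamma_r$ on $S_r = \{r = \text{const}\}$ directly from the collar expansion in Definition \ref{def: g}(iv), and then feed this into the Gauss equation in dimension three, where the intrinsic scalar curvature of the surface is determined entirely by the ambient curvature of $(M,g)$ and the second fundamental form. From (iv) we have $g = \sinh^{-2}(r)(dr^2 + g_r)$ with $g_r = g_0 + \frac{r^n}{n}h + e$, so the induced metric on the slice $\{r = a\}$ is $\gamma_a = \sinh^{-2}(a)\, g_a = \sinh^{-2}(a)(g_0 + \frac{a^n}{n}h + e)$. Here $n = 3$. The first step is therefore to expand $\gamma_r$, its inverse, and its curvature tensor in powers of $r$, using Assumption A to control all spherical derivatives of $e$ (which we need up to second order to get the curvature, and Assumption A gives us $\nabla^k_{\S^{n-1}} e = O(r^n)$ for $k \le 4$, comfortably enough).

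The cleanest route to the scalar curvature of $\gamma_r$ is to use conformal rescaling: $\gamma_r = \sinh^{-2}(r)\, g_r$ on the $2$-dimensional sphere, so the Gaussian curvature $K_{\gamma_r}$ relates to $K_{g_r}$ by the two-dimensional conformal change of metric formula, $K_{\gamma_r} = \sinh^2(r)\,(K_{g_r} + \frac{1}{2}\Delta_{g_r}\log\sinh^2 r)$; but since $\log\sinh^2 r$ is constant on each slice $S_r$, the Laplacian term vanishes, and we simply get $K_{\gamma_r} = \sinh^2(r)\, K_{g_r}$, hence $R_{\gamma_r} = 2\sinh^2(r)\, K_{g_r}$. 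So the whole problem reduces to showing $K_{g_r} = 1 + O(r^5)$, i.e. that the Gaussian curvature of the metric $g_r = g_0 + \frac{r^3}{3}h + e$ on $\S^2$ differs from that of the round metric $g_0$ by $O(r^5)$. Since $g_0$ has $K_{g_0} = 1$, and the linearization of Gaussian curvature at $g_0$ in the direction of a symmetric $2$-tensor $k$ is a first-order linear differential operator $DK_{g_0}(k)$, the $O(r^3)$ perturbation $\frac{r^3}{3}h$ contributes a term of size $O(r^3)$ in general — so naively one only gets $K_{g_r} = 1 + O(r^3)$, which would give $R = 2\sinh^2 r + O(r^3)$, not good enough.

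This is the main obstacle: one must show that the $O(r^3)$ and $O(r^4)$ contributions to $K_{g_r}$ actually vanish. The resolution I expect is a \emph{trace-free} structure in the expansion coming from the Einstein-like constraint built into the AH condition — more precisely, that the two-tensor $h$ in Definition \ref{def: g} must have the property that $DK_{g_0}(h) = $ (up to a divergence) proportional to $\operatorname{tr}_{g_0} h$ in a way that makes the relevant combination vanish, or alternatively that the linearized Gaussian curvature operator applied to $h$ integrates/localizes to zero. Concretely, on $\S^2$ the linearized scalar curvature operator is $DR_{g_0}(k) = -\Delta_{g_0}(\operatorname{tr} k) + \operatorname{div}\operatorname{div} k - \langle \operatorname{Ric}_{g_0}, k\rangle = -\Delta(\operatorname{tr} k) + \operatorname{div}\operatorname{div} k - \operatorname{tr} k$ (since $\operatorname{Ric}_{g_0} = g_0$ in dimension $2$). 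The point must be that, when one also accounts for the $g_r$-dependence of the full ambient expansion and the constraint that $\ol g = r^2 g$ is smooth up to the boundary with $|dr|_{\ol g} = 1$, the coefficient of $r^3$ in $K_{g_r}$ — namely $\frac{1}{3}DK_{g_0}(h)$ — combines with the induced-metric relation so that $R$ picks up no $r^3$ or $r^4$ term. I would verify this by writing $g_r^{-1} = g_0^{-1} - \frac{r^3}{3}g_0^{-1}h g_0^{-1} + O(r^4)$ (with $O(r^4)$ meaning also in derivatives, by Assumption A), computing the Christoffel symbols $\Gamma(g_r) = \Gamma(g_0) + \frac{r^3}{3}(\text{linear in }\nabla h) + O(r^4)$, and plugging into the curvature formula; the $r^3$-coefficient of $R_{g_r}$ is exactly $\frac{1}{3}$ times the first variation of scalar curvature in direction $h$, and I would show this is $O(r^2)$ after multiplying back by $\sinh^2 r \sim r^2$ — wait, that gives $O(r^5)$ directly once one sees that the naive $r^3$ term in $K_{g_r}$ is what survives: $R_{\gamma_r} = 2\sinh^2(r)K_{g_r} = 2\sinh^2 r(1 + \frac{r^3}{3}DK_{g_0}(h)\cdot(\text{something}) + \dots)$. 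So the honest check is whether $DK_{g_0}(h)$, or rather the full $r^3$-coefficient of $K_{g_r}$, vanishes identically; I suspect it does \emph{not} vanish pointwise, and instead the claimed $O(r^5)$ is simply wrong unless some additional reduction holds — so the careful step is to track whether $2\sinh^2 r \cdot \frac{r^3}{3}(\cdots)$ is being absorbed because $2\sinh^2 r = O(r^2)$ makes it $O(r^5)$ only if the $r^3$ term in $K_{g_r}$ is replaced by an $r^3$ term, i.e. $2 r^2 \cdot \tfrac{r^3}{3}(\cdots) = O(r^5)$. That is exactly it: since $\sinh^2 r = O(r^2)$ and the first correction to $K_{g_r}$ beyond $1$ is $O(r^3)$, the product is $2\sinh^2 r + 2\sinh^2 r \cdot O(r^3) = 2\sinh^2 r + O(r^5)$, which is the claim.

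So in summary: (1) write $\gamma_r = \sinh^{-2}(r) g_r$ and use the conformal rescaling of Gaussian curvature together with the constancy of $\sinh^2 r$ on each slice to reduce to $R_{\gamma_r} = 2\sinh^2(r)K_{g_r}$; (2) expand $g_r = g_0 + \frac{r^3}{3}h + e$ and show, using Assumption A to handle $e$ and its spherical derivatives, that $K_{g_r} = K_{g_0} + O(r^3) = 1 + O(r^3)$ by a direct (or linearized) computation of Gaussian curvature under a perturbation of the metric; (3) multiply to conclude $R_{\gamma_r} = 2\sinh^2(r)(1 + O(r^3)) = 2\sinh^2 r + O(r^5)$, using $\sinh^2 r = r^2 + O(r^4)$. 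The only real work is the bookkeeping in step (2), ensuring that the error term from $e$ genuinely enters at order $O(r^3)$ or better in the curvature — which is precisely what Assumption A's control on $\nabla^{\le 2}_{\S^{n-1}}e = O(r^n) = O(r^3)$ guarantees — and that the $\frac{r^3}{3}h$ term contributes no term of order lower than $r^3$ to $K_{g_r}$, which is automatic since it is itself $O(r^3)$ and Gaussian curvature depends smoothly on the metric.
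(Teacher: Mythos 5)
Your proposal is correct and follows essentially the same route as the paper: write $\gamma_r=\sinh^{-2}(r)\,g_r$, use that the conformal factor is constant on each slice to get $R_{\gamma_r}=\sinh^2(r)\,R_{g_r}$, show $R_{g_r}=2+O(r^3)$ by expanding the Christoffel symbols of $g_r=g_0+\frac{r^3}{3}h+e$ with Assumption A controlling $e$ and its first two spherical derivatives, and then observe that the $O(r^3)$ curvature error is multiplied by $\sinh^2 r=O(r^2)$ to yield the stated $O(r^5)$. The worry in your middle paragraph about the $r^3$-coefficient needing to vanish is a false alarm which you correctly resolve; no such cancellation is needed.
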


\begin{proof}
Recall that $g_r=g_0+\frac{r^3}{3}h+e$. Then $\gamma_r=\sinh^{-2}(r) g_r$ is the induced metric on $S_r$ from $g$. Let $R$ and $\tilde R $ be the scalar curvature of $S_r$ with respect to the metric $\gamma_r$ and $g_r$ respectively. It is easy to see that $R =\sinh^{2}(r)\tilde R$. We claim that
\begin{equation}\label{eq: R}
\tilde R=2+O(r^3).
\end{equation}
The result immediately follows from this claim.

To prove the claim,  let $\{y^i\}_{i=1}^2$ be the local coordinates on the lower hemisphere (say) of $\mathbb{S}^2$ induced by the stereographic projection from the north pole to the plane. Let $\tilde g_{ij}=g_r(\p{}{y^i}, \p{}{y^j}), g_{ij}=g_0(\p{}{y^i}, \p{}{y^j})$ and $\tilde \Gamma_{ij}^k, \Gamma_{ij}^k$ be the Christoffel symbols with respect to $\tilde g_{ij}$ and $g_{ij}$ respectively. Let $\tilde g^{ij}$ and $g^{ij}$ be the inverse of $\tilde g_{ij}$ and $g_{ij}$ respectively.
Then
\begin{equation}\label{eq: R_ijkl}
\tilde R =\sum_{j, k, l}\tilde g^{jk}\tilde R_{ljk}^l \text{ where }
\tilde R_{ijk}^l=\partial _i\tilde \Gamma_{jk}^l
-\partial _j\tilde \Gamma_{ki}^l+\sum_p\tilde \Gamma_{jk}^p \tilde \Gamma_{ip}^l -\sum_p\tilde \Gamma_{ik}^p \tilde \Gamma_{jp}^l,
\end{equation}
and
\begin{equation}\label{eq: R_ijkl0}
2 =\sum_{j, k, l}  g^{jk}  R_{ljk}^l \text{ where }
  R_{ijk}^l=\partial _i  \Gamma_{jk}^l
-\partial _j  \Gamma_{ki}^l+\sum_p  \Gamma_{jk}^p  \Gamma_{ip}^l -\sum_p  \Gamma_{ik}^p   \Gamma_{jp}^l,
\end{equation}
  {  Assumption A}  implies that
$$|\tilde g_{ij}-g_{ij}|=O(r^3), |\tilde g_{ij, k}-g_{ij, k}|=O(r^3)\text{ and }|\tilde g_{ij, kl}-g_{ij, kl}|=O(r^3),$$
where $g_{ij,k}=\p {g_{ij}}{y^k}$ etc.
Hence
\begin{equation}
\tilde\Gamma_{ij}^k-\Gamma_{ij}^k=O(r^3) \text{ and }\partial_i \tilde\Gamma_{jk}^l-\partial_i \Gamma_{jk}^l=O(r^3).
\end{equation}
In view of \eqref{eq: R_ijkl} and \eqref{eq: R_ijkl0}, these imply that $\tilde R_{ijk}^l-R_{ijk}^l=O(r^3)$ and hence $\tilde R -2=O(r^3).$ We conclude that \eqref{eq: R} is true. This completes the proof of the lemma.
\end{proof}

Next, we want to estimate the mean curvature $H$ of $S_r$ with respect to $g$.
\begin{lem}\label{lem: H}
If (M, g) is asymptotically hyperbolic satisfying Assumption A, then the mean curvature of $S_r$ is
$$H=2\cosh r-\frac{1}{2}r^3 \tr_{g_0}h+O(r^4).$$
\end{lem}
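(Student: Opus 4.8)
The plan is to compute the mean curvature $H$ of $S_r = \{r = a\}$ directly from the warped-product-type expression for $g$ near $\partial\ol M$, namely $g = \sinh^{-2}(r)(dr^2 + g_r)$ with $g_r = g_0 + \frac{r^3}{3}h + e$. I would first recall that for a metric of the form $g = \phi^2(r)(dr^2 + g_r)$, the unit normal to $S_r$ pointing toward increasing $r$ is $\nu = \phi^{-1}\partial_r$ up to the $r$-dependence of $g_r$; more precisely, one should be careful because $g_r$ itself depends on $r$, so the second fundamental form of $S_r$ picks up a contribution from $\partial_r g_r$ in addition to the contribution from the conformal factor. The clean way is to write the induced metric $\gamma_r = \sinh^{-2}(r) g_r$ and use the standard formula $H = \frac{1}{2}\gamma_r^{ij}\,\partial_\nu(\gamma_r)_{ij}$ where $\partial_\nu = \sinh(r)\,\partial_r$ (since $|\partial_r|_g = \sinh^{-1}(r)$, the unit normal is $\sinh(r)\partial_r$). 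This reduces everything to differentiating $\sinh^{-2}(r)g_r$ in $r$ and contracting.

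**Carrying out the expansion.**
Explicitly, $\partial_r\big((\gamma_r)_{ij}\big) = \partial_r\big(\sinh^{-2}(r)(g_r)_{ij}\big) = -2\sinh^{-3}(r)\cosh(r)(g_r)_{ij} + \sinh^{-2}(r)\,\partial_r(g_r)_{ij}$. Then
$$
H = \frac{1}{2}\,\gamma_r^{ij}\,\sinh(r)\,\partial_r(\gamma_r)_{ij}
= \frac{1}{2}\sinh(r)\Big(-2\sinh^{-1}(r)\cosh(r)\cdot 2 + g_r^{ij}\,\partial_r(g_r)_{ij}\Big),
$$
using $\gamma_r^{ij} = \sinh^2(r)\,g_r^{ij}$ and $\gamma_r^{ij}(g_r)_{ij} = \sinh^2(r)\cdot\text{(trace)} $... wait, more carefully $\gamma_r^{ij}(g_r)_{ij} = \sinh^2(r)\,g_r^{ij}(g_r)_{ij} = 2\sinh^2(r)$, so the first term contributes $\frac12\sinh(r)\cdot(-2\sinh^{-3}(r)\cosh(r))\cdot 2\sinh^2(r) = -2\cosh(r)$ — let me instead organize it as $H = -2\cosh r + \frac12\sinh(r)\,g_r^{ij}\,\partial_r(g_r)_{ij}$, and then note the overall sign convention for $H$ should make $H$ positive for small $r$, so $H = 2\cosh r - \frac12\sinh(r)\,g_r^{ij}\,\partial_r(g_r)_{ij}$ with the appropriate choice of inward/outward normal. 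Now $\partial_r(g_r)_{ij} = r^2 h_{ij} + \partial_r e_{ij}$, and by Assumption A, $\partial_r e = O(r^3)$ (the hypothesis says $\partial e/\partial r = O(r^n) = O(r^3)$). Since $g_r = g_0 + O(r^3)$, we get $g_r^{ij} = g_0^{ij} + O(r^3)$, hence $g_r^{ij}\partial_r(g_r)_{ij} = r^2\,\tr_{g_0} h + O(r^3)$. Multiplying by $\frac12\sinh(r) = \frac12 r + O(r^3)$ gives $\frac12 r^3\,\tr_{g_0} h + O(r^4)$, yielding $H = 2\cosh r - \frac12 r^3\,\tr_{g_0} h + O(r^4)$.

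**The main obstacle.**
The routine part is the algebra above; the genuinely delicate point is justifying the error terms $O(r^4)$ uniformly on $\mathbb{S}^2$, which requires knowing that the first radial and tangential derivatives of $e$ are controlled — this is exactly what Assumption A provides (it lists $\nabla^k_{\mathbb{S}^{n-1}} e$ for $k \le 4$ and $\partial_r e$ as $O(r^n)$), and one must check that only derivatives up to the order guaranteed by Assumption A actually enter the mean curvature formula. A second subtlety is the sign/normal-orientation convention: since $r \to 0$ corresponds to going to infinity in $(M,g)$, the "outward" normal (toward infinity) is $-\sinh(r)\partial_r$, and one must fix conventions so that $H > 0$ matches the statement and is consistent with the comparison value $2\cosh r$ for the model hyperbolic metric (where $h = 0$, $e = 0$, and the geodesic spheres $S_r$ in $\mathbb{H}^3$ indeed have $H = 2\coth(\text{dist}) $, which in these coordinates is $2\cosh r$). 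I would verify the leading term by specializing to $\mathbb{H}^3$ itself as a sanity check before trusting the perturbation. Finally, one should confirm that no $O(r^3)$-without-$\tr_{g_0}h$ term is hiding: the conformal-factor contribution is exactly $2\cosh r$ with no error, and the only $O(r^3)$ contribution comes from the $r^2 h_{ij}$ term in $\partial_r g_r$, so the expansion is clean through order $r^3$.
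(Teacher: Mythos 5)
Your proposal is correct and follows essentially the same route as the paper: the paper computes $H=\nu\bigl(\log\sqrt{\det(g_{ij})}\bigr)$ with $\nu=-\sinh r\,\partial_r$, which is exactly your contraction $\tfrac12\gamma_r^{ij}\partial_\nu(\gamma_r)_{ij}$, and then isolates the conformal-factor term $2\cosh r$ and the $r^2\,\tr_{g_0}h$ term from $\partial_r g_r$ using $\partial e/\partial r=O(r^3)$ just as you do. The only cosmetic difference is that you resolve the orientation by fixing the outward normal explicitly rather than by the positivity of $H$, which is the cleaner way to settle the sign you initially hesitated over.
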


\begin{proof}
Let $\{e_j\}_{j=1}^{2}$ be a local orthonormal frame on $(\mathbb{S}^{2}, g_0)$.
The outer unit normal of $S_r$ is $\nu=-\sinh r \p{}{r}$. Denote $g(e_i, e_j)$ by $g_{ij}$ and $g_r(e_i, e_j)$ by $\sigma_{ij}$, then
\begin{equation*}
\begin{split}
H&=\nu\left(\log\sqrt{\det{(g_{ij})}}\right)\\
&=-\sinh r \p{}{r}\left(\log\left(\sinh^{-2}r\sqrt{\det{(\sigma_{ij})}}\right)\right)\\
&=2\cosh r-\frac{1}{2}\frac{\sinh r}{\sqrt{\det{(\sigma_{ij})}}} \p{}{r}\det{(\sigma_{ij})}.
\end{split}
\end{equation*}
It is easy to see that $\det{(\sigma_{ij})}=1+\frac{r^3}{3}\tr_{g_0}h+O(r^4)$ and by the condition $\p{e}{r}=O(r^3)$, that
$\p{}{r}\det{(\sigma_{ij})}=r^2 \tr_{g_0}h+O(r^3)$. Combining these with the above calculation, we can get the result.
\end{proof}

By Lemma \ref{lem: R}, for sufficiently small $r$,  the Gaussian curvature $K$ of $(S_r,\gamma_r)$ is positive. Hence $(S_r,\gamma_r)$  can be isometrically embedded into $\mathbb{H}^3$  which is unique up to an isometry in $\mathbb{H}^3$ by the results of  Pogorelov \cite{P}. Moreover, by the Gauss equation, for an orthonormal frame in $S_r$,
$$ -1+\chi_{11}\chi_{22}-\chi_{12}^2=K>0. $$
Hence the embedded surface which will be denoted by $\Sigma_r$ is strictly convex.
Let $H_0$ be the mean curvature of $\Sigma_r$, we want to estimate $H_0$ and compare it with $H$.

To estimate $H_0$, we will generalize a result on convex compact hypersurfaces in $\R^n$ of Li-Weinstein \cite[Theorem 2]{LW} to compact hypersurfaces in $\H^n$.
\begin{lem}
Suppose $\Sigma$ is a closed convex hypersurface in $\mathbb{H}^n$, $n\geq 3$. If the scalar curvature $R$ of $\Sigma$ satisfies $R+(n-2)(n-3)>0$, then its mean curvature $H_0$ satisfies the inequality
$$ H_0 ^{\;2}\leq \max_{\Sigma}\left(\frac{2\hat R^2-2(n-1)\hat R-\Delta R}{R+(n-2)(n-3)}\right)$$
where $\hat R=R+(n-1)(n-2)$ and $\Delta$ is the Laplacian on $\Sigma$.
\end{lem}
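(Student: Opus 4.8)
The plan is to generalize the Euclidean argument of Li--Weinstein to the hyperbolic setting by working with the second fundamental form $\chi_{ij}$ of $\Sigma$ in $\H^n$ and exploiting the Gauss and Codazzi equations. First I would fix a point $p\in\Sigma$ and choose a local orthonormal frame that diagonalizes $\chi$ at $p$, with principal curvatures $\kappa_1,\dots,\kappa_{n-1}$; then $H_0=\sum_i\kappa_i$. The hyperbolic Gauss equation gives the intrinsic curvature tensor of $\Sigma$ as $R_{ijij}=\kappa_i\kappa_j-1$, so the scalar curvature is $R=\sum_{i\ne j}(\kappa_i\kappa_j-1)=H_0^2-|\chi|^2-(n-1)(n-2)$, where $|\chi|^2=\sum_i\kappa_i^2$. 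Hence $\hat R=R+(n-1)(n-2)=H_0^2-|\chi|^2$, which is the key algebraic identity replacing the Euclidean one.

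Next I would bring in the Laplacian of $R$. Differentiating the Gauss equation twice and using the Codazzi equation (the ambient space being constant curvature, so $\nabla\chi$ is totally symmetric), one obtains a Simons-type/Bochner identity expressing $\tfrac12\Delta|\chi|^2$ and $\Delta H_0$ in terms of $|\nabla\chi|^2$, curvature terms, and lower-order terms coming from the $-1$ in the Gauss equation. Combining these with $\hat R=H_0^2-|\chi|^2$, I would derive an identity of the schematic form
$$\Delta R = 2H_0\,\Delta H_0 + 2|\nabla H_0|^2 - 2|\nabla\chi|^2 + (\text{curvature terms}),$$
then evaluate at an interior maximum of $H_0^2$ — or rather argue pointwise — to eliminate the gradient-of-$\chi$ terms via the Cauchy--Schwarz-type inequality $|\nabla\chi|^2\ge \tfrac{n+1}{n-1}|\nabla H_0|^2$ as in Li--Weinstein, and to control $H_0\,\Delta H_0$. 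Rearranging the resulting inequality so that $H_0^2$ is isolated on one side, with denominator $R+(n-2)(n-3)$ (this is where the positivity hypothesis $R+(n-2)(n-3)>0$ enters, guaranteeing we may divide), yields the pointwise bound, and taking the maximum over $\Sigma$ gives the stated estimate.

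The main obstacle will be carefully tracking the extra lower-order curvature contributions that the hyperbolic background introduces relative to the flat case: every application of the Gauss equation now carries a $-1$, and the Simons-type identity for $|\chi|^2$ in a space form of curvature $-1$ picks up terms like $(n-1)|\chi|^2 - H_0^2$ (equivalently $-\hat R$ up to a factor), so one must be vigilant that these combine to produce exactly the $2\hat R^2 - 2(n-1)\hat R$ in the numerator and the $R+(n-2)(n-3)$ in the denominator rather than some other grouping. A secondary point is justifying the use of the maximum principle: since $\Sigma$ is a closed convex hypersurface, $H_0$ attains an interior maximum (there is no boundary), and strict convexity plus $R+(n-2)(n-3)>0$ ensures the algebra is non-degenerate there; one should also remark that convexity gives $\kappa_i>0$, so $H_0>0$ and squaring loses no information. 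Once the identity is assembled correctly, the rest is the same linear-algebra manipulation as in the Euclidean proof.
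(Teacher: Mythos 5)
Your plan is essentially the paper's proof: it adapts Li--Weinstein's maximum-principle argument directly, using the hyperbolic Gauss equation $\hat R=H_0^2-|\chi|^2$, the Codazzi symmetry of $\nabla\chi$, and the Ricci commutation identity to turn $\Delta R$ at the maximum point of $H_0$ into an algebraic expression in the principal curvatures that rearranges (via $\hat R-2(n-2)=R+(n-2)(n-3)$) to the stated bound. Two corrections to your sketch: the argument is genuinely \emph{not} pointwise --- you must work at the maximum of $H_0$, where $\nabla H_0=0$ kills the gradient terms outright (so the refined Kato-type inequality is unnecessary) and the Hessian of $H_0$ is negative semidefinite; and convexity enters not merely to make $H_0>0$ but essentially, first to get $H_0\Delta H_0=\bigl(\sum_i\lambda_i\bigr)\bigl(\sum_i {H_0}_{;ii}\bigr)\le\sum_i\lambda_i\,{H_0}_{;ii}$ from the sign of that Hessian, and second to invoke Li--Weinstein's algebraic lemma $-2\sum_i\lambda_i^3\le H_0^3-3|\chi|^2H_0$ (valid only for $\lambda_i>0$), without which the cubic term $H_0\sum_i\lambda_i^3$ produced by the curvature contraction cannot be absorbed.
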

\begin{proof}
We basically follow the ideas from \cite{LW}. Let $\chi$ be the second fundamental form of $\Sigma\subset \mathbb{H}^n$. Let $p\in\Sigma$ be such that $\displaystyle H_0(p)=\max_{\Sigma}H_0$. Let  $\{x^j\}_{j=1}^{n-1}$ be a normal coordinates of $\Sigma$ around $p$ so that $\chi_{ij}=\lambda_i\delta_{ij}$ at $p$. Then at $p$, ${H_0}_{;ij}$ is negative semi definite. Here we use $S_{;k}$ to denote the covariant derivative of $S$ on $\Sigma$ with respect to the induced metric.  Since $\chi_{ij}$ is positive,   at $p$ we have,
\begin{equation}\label{eq: Delta H}
H_0\Delta H_0=\lf(\sum_i \lambda_i\ri)\lf(\sum_i {H_0}_{;ii}\ri)\leq \sum_{i}\lambda_i {H_0}_{;ii}.
\end{equation}
All sums here will have indices from $1$ to $n-1$. Since $\H^n$ has constant curvature, the Codazzi equation implies
 \begin{equation}\label{eq: Codazzi}
 \chi_{ij;k}-\chi_{ik;j}=0.
 \end{equation}
By the Gauss equation, we have
\begin{equation}\label{eq: Gauss}
  R+(n-1)(n-2)=H_0^2-|\chi|^2.
  \end{equation}
  Let $R_{ijkl}$ be the intrinsic curvature tensor of $\Sigma$. At $p$,
\begin{equation*}
\begin{split}
\Delta R
&=2H_0 \Delta H_0+2|\nabla H_0|^2-2|\nabla \chi|^2-2\sum_{i,k}\lambda_i \chi_{ii;kk}\\
&\leq 2\sum_{i,k}\lambda_i\lf( \chi_{kk;ii}-\chi_{ii;kk}\ri)\quad \text{(by \eqref{eq: Delta H} and $ \nabla H_0 =0$)}\\
&= 2\sum_{i,k}\lambda_i\lf( \chi_{kk;ii}-\chi_{ki;ik}\ri)\quad \text{ (by \eqref{eq: Codazzi})}\\
&=2\sum_{i,k,m}\chi_{ij}(R_{kikm}\chi_{mi}+R_{kiim}\chi_{km})\quad \text{(by Ricci identity and \eqref{eq: Codazzi})}\\
&=2\sum_{i,k}R_{kiik}(-\lambda_i^2+\lambda_i\lambda_k)\\
&=2\sum_{i, k}(-1+\lambda_k\lambda_i)(-\lambda_i^2+\lambda_i\lambda_k) \quad{\text{(by the Gauss equation)}}\\
&=2\left((n-1)|\chi|^2-H_0\sum_i\lambda_i^3-H_0^2+|\chi|^4\right).
\end{split}
\end{equation*}
  By \cite[Lemma 2]{LW}, since $\lambda_i>0$,
  $$
 -2 \sum_i\lambda_i^3\le
 \lf(\sum_i\lambda_i\ri)^3-3\lf(\sum_i\lambda_i^2\ri)(\sum_i\lambda_i)
 =H_0^3-3|\chi|^2H_0.
 $$
Plugging this into the above and use \eqref{eq: Gauss}, at $p$,
\begin{equation*}
\begin{split}
\Delta R
&\leq 2(n-1)|\chi|^2+3\hat R H_0^2-2H_0^4+2|\chi|^4-2H_0^2\\
&= 2(n-1)(H_0^2-\hat R)+3\hat R H_0^2-2H_0^4+2(H_0^2-\hat R)^2-2H_0^2\\
&= -(\hat R-2(n-2))H_0^2-2(n-1)\hat R+2\hat R^2.
\end{split}
\end{equation*}
>From this it is easy to see that the lemma is true.
\end{proof}
 Applying the previous lemma to $\Sigma_r$ which is the embedded image of $(S_r,\gamma_r)$, we have:
\begin{cor}\label{cor: H_0 upper bdd}
With the same assumptions and notations as in Lemma \ref{lem: R}, for sufficiently small $r$, the mean curvature $H_0$ of $\Sigma_r$ in $\mathbb{H}^3$ satisfies
$$H_0^{\;2}\leq \max_{S_r}(2R+4-\frac{\Delta R}{R})$$
where $\Delta$ is the Laplacian on $S_r$ under the induced metric, $R=2K$ and $K$ is the Gaussian curvature of $S_r$.
\end{cor}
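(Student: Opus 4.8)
The plan is to specialize the preceding lemma to the case $n=3$ applied to the embedded surface $\Sigma_r\subset\H^3$, and then simplify the resulting inequality. First I would check the two hypotheses of that lemma. By Lemma \ref{lem: R}, the intrinsic scalar curvature of $(S_r,\gamma_r)$ is $R=2\sinh^2 r+O(r^5)$, which is strictly positive for all sufficiently small $r$; since the isometric embedding sends $(S_r,\gamma_r)$ onto $\Sigma_r$, the intrinsic scalar curvature of $\Sigma_r$ is the same $R$, and with $n=3$ we have $R+(n-2)(n-3)=R>0$. Moreover it was already observed, via the Gauss equation $-1+\chi_{11}\chi_{22}-\chi_{12}^2=K>0$, that $\Sigma_r$ is a closed strictly convex hypersurface in $\H^3$. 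Hence the preceding lemma applies to $\Sigma_r$.

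Next I would substitute $n=3$ into the conclusion of the lemma. Then $(n-2)(n-3)=0$, $(n-1)(n-2)=2$ and $n-1=2$, so $\hat R=R+2$ and the estimate reads
$$H_0^{\;2}\le\max_{\Sigma_r}\left(\frac{2(R+2)^2-4(R+2)-\Delta R}{R}\right).$$
Expanding the first two terms of the numerator, $2(R+2)^2-4(R+2)=2R^2+8R+8-4R-8=2R^2+4R$, so the right-hand side simplifies to $\max_{\Sigma_r}\bigl(2R+4-\tfrac{\Delta R}{R}\bigr)$. Since $\Sigma_r$ and $(S_r,\gamma_r)$ are isometric, the intrinsic quantities $R$ and $\Delta$ (and hence the maximum of this function) may be computed on $S_r$ with its induced metric, which gives
$$H_0^{\;2}\le\max_{S_r}\left(2R+4-\frac{\Delta R}{R}\right).$$
Recording finally that in dimension two the scalar curvature is twice the Gaussian curvature, $R=2K$, yields the statement as written.

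I do not expect any serious obstacle: the corollary is a direct specialization, so the only real content is the verification of the two hypotheses (positivity of $R$ from Lemma \ref{lem: R}, and strict convexity of $\Sigma_r$ from the Gauss equation, both already in hand) together with the elementary algebraic simplification at $n=3$. The one point worth stating carefully is the identification of the intrinsic geometry of the abstract surface $(S_r,\gamma_r)$ with that of its isometric image $\Sigma_r$ in $\H^3$, which is immediate because an isometric embedding preserves the induced metric and therefore all intrinsic curvatures and the Laplace operator.
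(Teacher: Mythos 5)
Your proposal is correct and follows exactly the paper's (implicit) argument: the paper simply states that the corollary follows by applying the preceding lemma to $\Sigma_r$, and your verification of the hypotheses ($R>0$ for small $r$ from Lemma \ref{lem: R}, strict convexity from the Gauss equation) together with the algebraic simplification at $n=3$ is precisely what is needed.
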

We now estimate $H_0$.
\begin{lem}\label{lem: H_0}
The mean curvature $H_0$ of $\Sigma_r$ in $\mathbb{H}^3$ is given by
$$H_0=2\cosh r+O(r^5).$$
\end{lem}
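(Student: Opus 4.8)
The plan is to combine the upper bound from Corollary \ref{cor: H_0 upper bdd} with a matching lower bound coming from the Gauss equation and the already-established estimate on $H$. First I would feed the curvature estimate of Lemma \ref{lem: R} into Corollary \ref{cor: H_0 upper bdd}. Since $R = 2\sinh^2 r + O(r^5)$, we have $2R + 4 = 4\cosh^2 r + O(r^5)$; the Laplacian term $\Delta R / R$ must be controlled separately, but note that $R$ is constant up to order $O(r^5)$ in the sense that $R = 2\sinh^2 r + O(r^5)$ with the error term varying over $S_r$, so $\Delta R = \Delta(O(r^5))$. Here I would need the asymptotic expansions of Definition \ref{def: g} and Assumption A to be differentiable twice on $\mathbb{S}^{n-1}$ (which is granted), so that the $O(r^5)$ error in $R$ together with its first two intrinsic derivatives is genuinely of that order uniformly on $S_r$; then $\Delta R = O(r^5)$ (the Laplacian of $\gamma_r = \sinh^{-2}(r) g_r$ introduces a factor $\sinh^2 r \sim r^2$, which only helps). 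Dividing by $R \sim 2r^2$ gives $\Delta R / R = O(r^3)$. Hence $H_0^{\;2} \le 4\cosh^2 r + O(r^3)$, i.e. $H_0 \le 2\cosh r + O(r^3)$.

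For the lower bound, I would use the Gauss equation \eqref{eq: Gauss} in the form $H_0^2 = R + 2 + |\chi|^2 = 2\sinh^2 r + 2 + |\chi|^2 + O(r^5) = 2\cosh^2 r + |\chi|^2 + O(r^5)$ for $n=3$. Since $\Sigma_r$ is strictly convex, $|\chi|^2 = \lambda_1^2 + \lambda_2^2 \ge \tfrac12(\lambda_1+\lambda_2)^2 = \tfrac12 H_0^2$, which alone only gives $H_0^2 \le 4\cosh^2 r + O(r^5)$ again, so a sharper input is needed. The natural sharper input is that the upper bound already forces $H_0 \le 2\cosh r + O(r^3)$, hence $H_0^2 \le 4\cosh^2 r + O(r^3)$; combined with $H_0^2 = 2\cosh^2 r + |\chi|^2 + O(r^5)$ this yields $|\chi|^2 \le 2\cosh^2 r + O(r^3)$. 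On the other hand, the two principal curvatures $\lambda_1, \lambda_2 > 0$ satisfy $\lambda_1 \lambda_2 = 1 + K = 1 + \sinh^2 r + O(r^5) = \cosh^2 r + O(r^5)$, so by AM–GM $|\chi|^2 = \lambda_1^2 + \lambda_2^2 \ge 2\lambda_1\lambda_2 = 2\cosh^2 r + O(r^5)$, and therefore $|\chi|^2 = 2\cosh^2 r + O(r^3)$ with equality forced up to that order, which means $\lambda_1$ and $\lambda_2$ are each $\cosh r + O(r^{3/2})$-close; plugging back, $H_0^2 = 2\cosh^2 r + 2\cosh^2 r + O(r^3) = 4\cosh^2 r + O(r^3)$, giving $H_0 = 2\cosh r + O(r^3)$.

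To upgrade the error from $O(r^3)$ to $O(r^5)$ I would bootstrap: with $H_0 = 2\cosh r + O(r^3)$ in hand, $|\chi|^2 = H_0^2 - R - 2 = 4\cosh^2 r + O(r^3) - 2\sinh^2 r - 2 + O(r^5) = 2\cosh^2 r + O(r^3)$. Writing $\lambda_i = \cosh r + \mu_i$ with $\mu_1 + \mu_2 = H_0 - 2\cosh r =: \epsilon$ and $\lambda_1\lambda_2 = \cosh^2 r + O(r^5)$, one gets $2\cosh r\,\epsilon + (\mu_1^2 + \mu_2^2)/1 \cdot(\ldots)$ — more directly, $|\chi|^2 = H_0^2 - 2\lambda_1\lambda_2 = H_0^2 - 2\cosh^2 r + O(r^5)$, so the Gauss equation is in fact the identity $H_0^2 - 2\lambda_1\lambda_2 = R + 2 + 2(\lambda_1\lambda_2 - \cosh^2 r)\cdot 0$; the cleanest route is to use $H_0^2 = R + 2 + |\chi|^2$ and $\det \chi = 1 + K$ simultaneously, which gives $H_0^2 = R + 2 + H_0^2 - 2(1+K)$, a tautology, so instead I observe $H_0^2 - 4(1+K) = (\lambda_1 - \lambda_2)^2 \ge 0$ and $(\lambda_1-\lambda_2)^2 = H_0^2 - 4\det\chi$, while $H_0^2 \le 4\cosh^2 r + O(r^3)$ and $4\det\chi = 4\cosh^2 r + O(r^5)$ together pin $(\lambda_1 - \lambda_2)^2 = O(r^3)$; feeding $(\lambda_1-\lambda_2)^2 = O(r^3)$ back gives $H_0^2 = 4\det\chi + (\lambda_1-\lambda_2)^2 = 4\cosh^2 r + O(r^3)$, and one more pass using the Corollary's bound with the now-known smallness of $\Delta R/R$ refined to $O(r^3)$ (in fact the true order, given Assumption A controls four derivatives of $e$, is better) closes the gap to $O(r^5)$. \textbf{The main obstacle} I anticipate is rigorously justifying that $\Delta R / R = O(r^5)$ rather than merely $O(r^3)$: this requires examining the expansion of $R$ on $S_r$ carefully — the leading $2\sinh^2 r$ term is spatially constant so contributes nothing to $\Delta R$, and the $O(r^5)$ remainder, being (by Assumption A) twice-differentiable on $\mathbb{S}^{n-1}$ with derivatives of the same order, has intrinsic Laplacian $O(r^5) \cdot \sinh^2 r = O(r^7)$, so $\Delta R / R = O(r^5)$; making this precise is the crux, after which the convexity/Gauss-equation squeeze delivers the stated expansion directly and the bootstrap above becomes unnecessary.
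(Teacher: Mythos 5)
Your final argument coincides with the paper's: the lower bound $H_0^2\ge 4\cosh^2 r+O(r^5)$ comes from the Gauss equation together with $|\chi|^2\ge 2\lambda_1\lambda_2=2\hat R$, the upper bound from Corollary \ref{cor: H_0 upper bdd}, and the crux is exactly as you identify it in your last paragraph --- the leading term $2\sinh^2 r$ of $R$ is spatially constant, Assumption A controls two further tangential derivatives of the $O(r^5)$ remainder, and the conformal factor in $\Delta_{\gamma_r}=\sinh^2 r\,\Delta_{g_r}$ yields $\Delta R/R=O(r^5)$, which is precisely the computation $\Delta R/R=\frac{\sinh^4 r}{R}\Delta_{g_r}\tilde R$ in the paper. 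The intermediate bootstrap is unnecessary (as you note), and there is a sign slip in the second paragraph: $|\chi|^2\ge\frac12 H_0^2$ substituted into $H_0^2=2\cosh^2 r+|\chi|^2+O(r^5)$ gives $H_0^2\ge 4\cosh^2 r+O(r^5)$, i.e.\ it already \emph{is} the desired lower bound rather than a redundant upper bound.
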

\begin{proof}

By the Gauss equation, $2\hat R\leq \hat R+|\chi|^2=H_0^2$ where $\hat R=R+2$ and $\chi$ is the second fundamental form of the embedded $S_r$. So by combining Lemma \ref{lem: R} and Corollary \ref{cor: H_0 upper bdd}, we have
$$4\cosh^2 r+O(r^5)\leq H_0^2\leq 4\cosh^2r+\max_{S_r}\left|\frac{\Delta R}{R}\right|+O(r^5).$$
The proof would be completed if we can show that $\frac{\Delta R}{R}=O(r^5)$. The proof is analogous to that of Lemma \ref{lem: R}. Using the notations in the proof of Lemma \ref{lem: R}, it is easy to see that
\begin{equation}\label{eq: Delta R}
\frac{\Delta R}{R}= \frac{\sinh^4 r }{  R}\Delta_{g_r}\tilde R
\end{equation}
where $\tilde R$ is the scalar curvature with respect to $g_r$. Using Assumption A, we have
$$|\partial^{(k)} _{}\tilde \Gamma_{ij}^l-\partial^{(k)} _{}\Gamma_{ij}^l|=O(r^3) \text{ for }k=0, 1, 2, 3,$$
with respect to the coordinates $\{y^i\}_{i=1}^2$. Together with \eqref{eq: R_ijkl} and \eqref{eq: R_ijkl0}, we conclude that $\partial_i \tilde R-\partial_i  R=O(r^3)\text{ and }\partial^2_{ij} \tilde R-\partial^2_{ij}  R=O(r^3)$. Hence
$$
\Delta_{g_r}\tilde R-\Delta_{g_0}R_0= O(r^3).
$$
As $R_0=2$ is a constant, by \eqref{eq: Delta R} and Lemma \ref{lem: R}, the result follows.
\end{proof}
Combining Lemma \ref{lem: H} and Lemma \ref{lem: H_0}, we have
\begin{cor}\label{cor: H_0-H}
On $S_r$, we have
$$H_0-H=\frac{1}{2}r^3 \tr_{g_0}h+O(r^4).$$
\end{cor}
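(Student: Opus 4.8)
The plan is to simply subtract the two asymptotic expansions that have already been established, so the only real content is to observe that the leading terms cancel exactly. By Lemma \ref{lem: H}, on $S_r$ we have
$$
H = 2\cosh r - \frac{1}{2} r^3 \tr_{g_0} h + O(r^4),
$$
while Lemma \ref{lem: H_0} gives, for the mean curvature $H_0$ of the isometric embedding $\Sigma_r \subset \mathbb{H}^3$,
$$
H_0 = 2\cosh r + O(r^5).
$$
The leading terms $2\cosh r$ — which encode the fact that to this order both $S_r \subset (M,g)$ and its embedded image $\Sigma_r$ look like the same geodesic sphere in $\mathbb{H}^3$ — are literally the same function of $r$, so they cancel. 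Subtracting the two identities and absorbing the $O(r^5)$ error from $H_0$ into the larger $O(r^4)$ error from $H$ yields
$$
H_0 - H = \frac{1}{2} r^3 \tr_{g_0} h + O(r^4),
$$
which is the assertion of the corollary.

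There is essentially no obstacle at this step: all of the analysis has been carried out in the two preceding lemmas. The delicate part is Lemma \ref{lem: H_0}, where one uses Corollary \ref{cor: H_0 upper bdd} together with the estimate $\Delta R / R = O(r^5)$ — itself obtained by the same Christoffel-symbol comparison as in Lemma \ref{lem: R} — to pin $H_0$ down to order $r^5$; and Lemma \ref{lem: H}, where one expands $\det(\sigma_{ij})$ via $g_r = g_0 + \frac{r^3}{3} h + e$ and uses the Assumption A bound $\partial e/\partial r = O(r^3)$ to isolate the $-\frac{1}{2} r^3 \tr_{g_0} h$ term. Granting those, the corollary is pure bookkeeping of error orders, the only point worth noting being that the cancellation of $2\cosh r$ is exact rather than merely asymptotic. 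It may be worth remarking that this mirrors the asymptotically flat situation, where the mean curvature of a coordinate sphere and that of its Euclidean isometric embedding both equal $2/r$ to leading order, with the mass-carrying discrepancy appearing only at the next order; here that discrepancy is $\frac{1}{2} r^3 \tr_{g_0} h$, which is exactly what feeds into the integrand $(H_0 - H) X^{(r)}$ of Theorem \ref{thm: main}.
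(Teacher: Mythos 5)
Your proof is correct and is exactly the paper's argument: the corollary is stated there as an immediate consequence of combining Lemma \ref{lem: H} and Lemma \ref{lem: H_0}, i.e.\ subtracting the two expansions so that the $2\cosh r$ terms cancel and the $O(r^5)$ error is absorbed into $O(r^4)$. Nothing further is needed.
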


\section{Inscribed and circumscribed geodesic spheres}\label{sec: geom}

It is well known that  a compact convex hypersurface $\Sigma$ in $\R^n$ can contain and be contained in spheres with radius depending only on the upper and lower bound of principal curvatures $\lambda_i$. In this section, we will describe the corresponding results in $\H^n$, which will be used later. We will sketch the proofs for the sake of completeness whenever we could not locate a reference. We only consider the case $n=3$. The general case is similar. The following is a direct consequence of a result of Ralph Howard \cite[Theorem 4.5]{H}. We would like to thank him for this information.
 \begin{prop}\label{prop: inner}
Let $\Sigma$ be a compact convex surface in $\mathbb{H}^3$ and $\displaystyle \coth b=\max_{x\in \Sigma}\lambda_i(x)\geq \min_{x\in \Sigma}\lambda_i(x)>1$,
then there is a geodesic sphere of radius $b$ which is contained in the interior of $\Sigma$.
\end{prop}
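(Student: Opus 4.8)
The plan is to reduce the statement to a pure comparison fact about a \emph{single} geodesic sphere and then invoke the convexity of $\Sigma$. The key local observation is that a geodesic sphere of radius $b$ in $\mathbb{H}^3$ has constant principal curvatures equal to $\coth b$; thus if every principal curvature of $\Sigma$ is at least $\coth b$ (which is the content of $\min_\Sigma\lambda_i \geq \coth b$, equivalently $\coth b \geq \max_\Sigma\lambda_i$ read correctly as the \emph{largest} radius we may hope to inscribe — here the hypothesis $\coth b = \max_\Sigma\lambda_i$ fixes $b$ as the inscribed radius), then $\Sigma$ is ``more curved'' than such a sphere at every point. First I would recall the rolling/support-sphere characterization: a closed hypersurface $\Sigma$ contains in its interior a geodesic ball of radius $b$ centered at a point $p$ if and only if the distance function $\rho(\cdot) = \mathrm{dist}(p,\cdot)$ to some interior point $p$ satisfies $\rho \geq b$ on $\Sigma$, and the natural candidate for $p$ is an interior point maximizing the distance to $\Sigma$, i.e.\ the center of a largest inscribed ball. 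Calling $b'$ the radius of that largest inscribed ball, it suffices to show $b' \geq b$.

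The main step is the following comparison: at a point $q\in\Sigma$ realizing the inscribed radius $b'$ (so the geodesic ball $B_p(b')$ is internally tangent to $\Sigma$ at $q$), the geodesic sphere $\partial B_p(b')$ lies locally inside $\Sigma$ near $q$, hence its second fundamental form at $q$ is bounded above by that of $\Sigma$ at $q$ in the direction of the common inward normal. The second fundamental form of $\partial B_p(b')$ at $q$ is $\coth(b')\,\mathrm{Id}$, while the principal curvatures of $\Sigma$ at $q$ are at most $\max_\Sigma\lambda_i = \coth b$. Therefore $\coth b' \leq \coth b$, which gives $b' \geq b$ since $\coth$ is decreasing on $(0,\infty)$. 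This is exactly the specialization of Howard's Theorem 4.5 in \cite{H}, which treats inscribed and circumscribed balls in spaces of constant curvature via such support-sphere comparisons; I would cite that result for the existence and for the technical point that the largest inscribed ball is genuinely internally tangent (rather than the degenerate case where $\Sigma$ is not smooth or the tangency fails), using that $\Sigma$ is smooth and strictly convex.

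I expect the main obstacle to be purely one of careful bookkeeping rather than deep geometry: making sure the tangency point $q$ is interior to a smooth piece of $\Sigma$ and that the inward normals of $\partial B_p(b')$ and of $\Sigma$ agree there, so that the comparison of second fundamental forms has the correct sign. In $\mathbb{H}^3$ this is clean because geodesic spheres are totally umbilic with curvature $\coth(\text{radius})$ depending monotonically on the radius, so once the tangency and normal-matching are established the inequality $\coth b' \le \coth b$ is immediate. Since the paper only needs the case $n=3$ and explicitly defers to \cite[Theorem 4.5]{H}, I would keep the argument at the level of a sketch: state the support-sphere comparison, identify $p$ as the center of a largest inscribed ball, apply the umbilicity of geodesic spheres to conclude $b' \geq b$, and remark that the general $n$ case is identical with $\coth b$ replaced by the appropriate constant.
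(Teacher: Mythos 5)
There is a genuine gap here: the comparison inequality at the heart of your ``main step'' has the wrong sign. If the largest inscribed ball $B_p(b')$ is internally tangent to $\Sigma$ at $q$, then near $q$ the sphere $\partial B_p(b')$ lies on the \emph{interior} side of $\Sigma$; writing both surfaces as graphs over the common tangent plane with the inward normal as the positive direction, the sphere's graph dominates that of $\Sigma$, so its second fundamental form at $q$ dominates as well, i.e.\ $\coth b'\ge \lambda_i(q)$ for each $i$ --- the inscribed sphere is \emph{more} curved at the tangency point, not less. (Euclidean test case: the largest circle inscribed in an ellipse with semi-axes $a>b$ has radius $b$ and touches the ellipse at the ends of the minor axis, where the ellipse has curvature $b/a^2<1/b$.) Your claimed inequality $\coth b'\le \max_\Sigma\lambda_i$ is therefore unjustified, and the correct-sign version of the tangency comparison yields only an \emph{upper} bound on $b'$, which is useless for the proposition. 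More fundamentally, no comparison at the single deepest tangency point can prove the statement: the largest inscribed ball is typically prevented from growing because it touches $\Sigma$ at two or more points (a width/cut-locus obstruction), not because of a curvature obstruction at one point, and ruling out that scenario is precisely the nontrivial global content of the Blaschke--Howard rolling theorem.

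The paper's proof cites Howard's Theorem 4.5 for what it actually asserts: for a convex hypersurface all of whose normal curvatures exceed $1$, the largest radius of geodesic balls that roll freely inside $\Sigma$ equals the \emph{focal distance} of $\Sigma$. It then computes that focal distance in $\mathbb{H}^3$ from the $\Sigma$-Jacobi fields $(\cosh t-\lambda_i\sinh t)E_i$ along the inward normal geodesics, whose first zero occurs at $\coth t=\lambda_i$; minimizing over $x\in\Sigma$ and $i$ gives exactly $b$ with $\coth b=\max_\Sigma\lambda_i$. If you wish to keep a citation-based proof you should cite Howard for the global rolling statement and supply this focal-distance computation; as written, your argument both misstates the local comparison and attributes to Howard a proof mechanism (a one-point support-sphere comparison) that cannot establish the result.
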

\begin{proof}
By \cite[Theorem 4.5]{H}, since $\lambda_i> 1$ on $\Sigma$, the largest radius of geodesic balls which can roll inside $\Sigma$ is equal to the focal distance of $\Sigma$. It is not hard to see that the focal distance of   $\Sigma$ in $\mathbb{H}^3$ is equal to
$$\min_{x\in \Sigma} \{\rho: \coth \rho=\lambda_i(x), i=1, 2 \}.$$
This can be seen by considering the $\Sigma$-Jacobi field along the inward-pointing geodesics perpendicular to $\Sigma$, see for example \cite{H} p. 474. From this the result follows.
\end{proof}
For circumscribed geodesic spheres of $\Sigma$, we have the following:

\begin{prop}\label{prop: outer}
Let $\Sigma$ be a closed convex surface in $\mathbb{H}^3$ with $\lambda_i>\coth a>1$ on $\Sigma$,
then  there is a geodesic sphere of radius $a$ which contains $\Sigma$ in its interior.
\end{prop}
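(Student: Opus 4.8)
The plan is to produce a geodesic ball of radius $a$ containing $\Sigma$; this is the analogue for circumscribed spheres of Proposition~\ref{prop: inner}, and I would deduce it from a rolling argument in $\H^3$ parallel to the one used there (or from Howard's results \cite{H}). First I would make a harmless reduction. Since $\Sigma$ is compact, $\min_{\Sigma}\lambda_i>\coth a$, so there is $a'\in(1,a)$ with $\lambda_i\ge\coth a'$ on $\Sigma$; as $\overline{B(p,a')}\subset B(p,a)$ for every $p$, it suffices to find a \emph{closed} geodesic ball of radius $a'$ containing $\Sigma$. Thus I may assume $\lambda_i\ge\coth a$ on $\Sigma$ and look for a closed ball of that radius.

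Next I would set up the family of candidate balls. For $q\in\Sigma$ let $\nu(q)$ be the outward unit normal and put $c(q)=\exp_q(-a\,\nu(q))$ and $B_q=B(c(q),a)$, the geodesic ball of radius $a$ with $q\in\partial B_q$ and tangent to $\Sigma$ at $q$ from the concave side. With respect to the common outward conormal $\nu(q)$, both principal curvatures of the geodesic sphere $\partial B_q$ equal $\coth a\le\lambda_i(q)$; since $\Sigma$ and $\partial B_q$ are tangent at $q$, comparing second fundamental forms shows $\Sigma\subset\overline{B_q}$ in a neighbourhood of $q$. The whole point of the proposition is to upgrade this \emph{local} inclusion to a \emph{global} one for a suitable $q$.

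The key step, and the one I expect to be the main obstacle, is exactly this globalization. I would consider $A=\{q\in\Sigma:\Sigma\subset\overline{B_q}\}$, which is closed because $q\mapsto c(q)$ is continuous, and try to show $A$ is nonempty (one then checks $A$ is also open, so $A=\Sigma$, and any $B_q$ with $q\in A$ is the ball we want). To see $A\neq\emptyset$ I would take $q_0$ minimizing $q\mapsto\max_{x\in\Sigma}d(x,c(q))$; if the minimum value exceeded $a$, the set of points of $\Sigma$ farthest from $c(q_0)$ would have to be arranged so that no motion of $q_0$ along $\Sigma$ decreases this maximum, and the curvature bound $\lambda_i\ge\coth a$ — which governs how $c(q)$ moves with $q$ through the $\Sigma$-Jacobi fields along the inward normal geodesics, just as in the focal-distance computation behind Proposition~\ref{prop: inner} — is precisely what excludes this. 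Equivalently, this is the interior form of Blaschke's rolling theorem in $\H^3$: the convex body bounded by $\Sigma$ can be moved freely inside a geodesic ball of radius $a$; one may also invoke Howard's rolling results \cite{H} at this point.

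Finally, I would record the subtlety that forces a genuinely global argument. The tempting shortcut — take the smallest enclosing geodesic ball $B(p_0,\rho_0)$ of $\Sigma$ and examine a point $q$ where $\Sigma$ touches $\partial B(p_0,\rho_0)$ — only yields $\lambda_i(q)\ge\coth\rho_0$, which is entirely consistent with $\rho_0>a$ and so does not bound the circumradius; hence the extremal/rolling argument above is really needed. Once $A\neq\emptyset$ is established, the strict inequality $\lambda_i>\coth a$ (via the reduction to $a'<a$) upgrades the closed ball of radius $a'$ to a geodesic ball of radius $a$ containing $\Sigma$ in its interior, which is the assertion.
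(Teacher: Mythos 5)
Your setup (the tangent balls $B_q$, the local second--fundamental--form comparison at the point of tangency, and the reduction to $\lambda_i\ge\coth a'$ for some $a'<a$) is fine, but the step you yourself flag as ``the main obstacle'' --- showing that some $B_q$ actually contains all of $\Sigma$ --- is exactly the content of the proposition, and your proposal does not prove it. The extremal argument is left at the level of intent: you minimize $q\mapsto\max_{x}d(x,c(q))$ and assert that the curvature bound ``excludes'' a minimum value exceeding $a$, but you never derive the contradiction. Note that the first variation of $c(q)=\exp_q(-a\nu(q))$ is governed by $\cosh a\,v-\sinh a\,S(v)$, which under $S\ge\coth a\,I$ is negative semidefinite against $v$, so the inner parallel set $\{c(q)\}$ is degenerate and it is not at all clear how the farthest--point analysis at $q_0$ closes. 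The fallback appeal to Howard \cite{H} does not rescue this: the paper cites Howard only for the \emph{inscribed} ball (Proposition \ref{prop: inner}, via the focal distance), and the authors explicitly state they could not locate a reference for the circumscribed version, which is why they supply a proof. (Your parenthetical claim that $A$ is open is also unsubstantiated, though it is not needed once $A\neq\emptyset$.)

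For comparison, the paper's actual route is genuinely different and avoids any extremal or rolling argument in dimension three. Fix an arbitrary $p\in\Sigma$ and let $S$ be the geodesic sphere of radius $a$ tangent to $\Sigma$ at $p$; intersect both with every normal section $P$ (a totally geodesic $\H^2$ through $p$ containing the normal geodesic). By Euler's formula the section curve $\gamma=P\cap\Sigma$ has geodesic curvature $>\coth a$ in $P$, while $\beta=P\cap S$ is a geodesic circle of curvature $\coth a$. A hyperbolic Schur--type comparison theorem (Lemma \ref{Schur-l1}, proved in the half--plane model) then yields the two--dimensional containment $\gamma\subset\beta$ (Lemma \ref{lem: curve}), and since $P$ was arbitrary, $\Sigma\subset S$. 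That two--dimensional comparison lemma is the missing ingredient that turns your local tangency statement into a global one; without it (or a genuine proof of your $A\neq\emptyset$ claim), the argument has a gap.
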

Since we cannot find an explicit reference for this, we will give more details of the proof. We use the  idea of Andrejs Treibergs \cite{Treibergs}  to give a proof. We would like to thank him for the idea.
To show this, we need the following lemma about convex curves on $\mathbb{H}^2$ which is an extension of Schur's theorem for plane curves.
\begin{lem}\label{Schur-l1} Let $\a$ and $\b$ be two curves in $\H^2$ with same length $l$ parametrized by arc length. Suppose let $\gamma$ be the geodesic from $\a(0)$, $\a(l)$ and $\sigma$ be the geodesic from $\b(0)$ to $\b(l)$. Suppose $\a$ and $\gamma$ bounds a geodesically convex region, and $\b$, $\sigma$ bounds a geodesically convex region. Suppose the geodesic curvature $k$ of $\a$ is larger than the geodesic curvature $\tilde k$ of $\b$ which are assumed to be positive. Then length of $\gamma$ is less than the length of $\sigma$.
\end{lem}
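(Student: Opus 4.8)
The plan is to prove this Schur-type comparison by the standard technique: parametrize both curves by arc length, introduce the turning angle, and compare the ``width'' of the chord via an integral formula. First I would set up the geometry intrinsically in $\H^2$. For a curve $\a$ parametrized by arc length $s\in[0,l]$, bounding a convex region together with its chord geodesic $\g$, let $\theta(s)$ denote the angle that the tangent $\a'(s)$ makes with the parallel transport of $\a'(0)$ along... more precisely, since we are on $\H^2$ rather than the plane, the cleanest device is to work with the support function or, better, to use the Gauss--Bonnet relation: along $\a$, the quantity $\int_0^s k(t)\,dt$ measures the total turning, and for a convex arc this is monotone and bounded by $\pi$ (the curve cannot turn more than that before the chord construction degenerates). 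I would denote $\phi(s)=\int_0^s k$ and $\tilde\phi(s)=\int_0^s\tilde k$, so that $\phi(s)\ge\tilde\phi(s)$ pointwise and both are increasing.

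The key step is an integral formula expressing the length of the chord $\g$ in terms of $k$. In the Euclidean case, if one places $\a(0)$ at the origin with $\a'(0)$ horizontal, then $\a(l)-\a(0)=\int_0^l(\cos\phi(s),\sin\phi(s))\,ds$, and the chord length is $|\int_0^l e^{i\phi(s)}ds|$; Schur's theorem then follows because increasing $\phi$ (while keeping it in $[0,\pi]$) pulls the endpoint back toward the start. In $\H^2$ I would replace this with the corresponding ODE system for the Frenet frame: writing the development of $\a$ into the tangent plane at $\a(0)$, or equivalently solving $\a'' = k\,J\a' $ (with $J$ the rotation by $\pi/2$ in the moving frame and the covariant derivative along $\a$), the endpoint $\a(l)$ and hence the distance $d(\a(0),\a(l))$ becomes a monotone functional of the turning function $\phi$ under the convexity constraint. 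Concretely I expect to reduce to the following one-variable fact: on $\H^2$, among all convex arcs of fixed length $l$ whose turning stays in $[0,\pi]$, the chord length is a decreasing function of the turning function in the pointwise order. Then $\phi\ge\tilde\phi$ gives $\mathrm{length}(\g)\le\mathrm{length}(\sigma)$ immediately.

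To make the monotonicity rigorous I would use a one-parameter family argument: interpolate $\phi_u = (1-u)\tilde\phi + u\phi$ for $u\in[0,1]$, each still increasing and valued in $[0,\pi]$ (using that $0\le\tilde\phi\le\phi\le\pi$, which is exactly the geodesic-convexity hypothesis on both regions), let $\a_u$ be the arc-length curve in $\H^2$ with geodesic curvature $\phi_u'$ and fixed initial point and frame, and let $L(u)=d(\a_u(0),\a_u(l))$. Differentiating $L(u)$ in $u$ via the first variation of arc length / distance, the variation field along $\a_u$ is generated by $\partial_u\phi_u = \phi-\tilde\phi\ge 0$, and a sign computation (the hyperbolic analogue of the Euclidean $\frac{d}{du}\mathrm{Re}\int e^{i\phi_u}\,$-type estimate, using $\sin$ nonnegativity on $[0,\pi]$ and the fact that $\sinh$ of the distance is positive) shows $L'(u)\le 0$. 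Hence $L(1)\le L(0)$, i.e. $\mathrm{length}(\g)\le\mathrm{length}(\sigma)$, with strict inequality when $k>\tilde k$ somewhere and the arcs are not both geodesics.

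The main obstacle I anticipate is precisely this hyperbolic first-variation sign computation: in $\R^2$ the endpoint is literally $\int_0^l e^{i\phi(s)}\,ds$ so everything is explicit, but in $\H^2$ one must carry along the Jacobi-field behavior of the development, and controlling the sign of $L'(u)$ requires knowing that the perpendicular Jacobi fields along the geodesic realizing $L(u)$ do not change sign — which is where the convexity bound $\phi\le\pi$ (keeping us below the first conjugate distance phenomena for the relevant comparison) is essential. A secondary technical point is verifying that the hypotheses ``$\a,\g$ bound a geodesically convex region'' genuinely translate into $0\le\phi\le\pi$ and $\phi$ increasing; this is a Gauss--Bonnet bookkeeping argument for the bigon bounded by $\a$ and $\g$, which I would state carefully but not belabor.
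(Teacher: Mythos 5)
Your overall strategy (view the chord length as a functional of the curvature function and prove it is monotone decreasing in the pointwise order) is a legitimate route to Schur-type theorems, and it differs from the paper's proof, which works explicitly in the half-plane model $\{x>0\}$ with metric $(dx^2+dy^2)/x^2$: there the chord is placed along a horizontal geodesic, the angle $\theta(s)$ between $\a'(s)$ and the horizontal geodesic foliation satisfies $x'=x\cos\theta$, $y'=x\sin\theta$ and $k=-\sin\theta+\theta'$, an ODE comparison gives $\tilde\theta\le\theta$, and integrating $x'/x=\cos\theta$ converts this into the chord comparison. However, as written your proposal has two genuine gaps.

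First, the central analytic step --- the sign of $L'(u)$ in your interpolation, i.e.\ the hyperbolic analogue of $\frac{d}{du}\bigl|\int_0^l e^{i\phi_u}\,ds\bigr|\le 0$ --- is exactly the content of the lemma in infinitesimal form, and you explicitly defer it as ``the main obstacle'' rather than carry it out. Without that computation (which is where all the work lies, since in $\H^2$ the endpoint is not an explicit integral of the turning function but the solution of a frame ODE), the proof is a plan, not a proof. Second, your bookkeeping claim that geodesic convexity of the bigon translates into $0\le\phi\le\pi$ for $\phi(s)=\int_0^s k$ is false in $\H^2$: Gauss--Bonnet for the bigon with $K=-1$ gives $\int_\a k=2\pi+A-\e_1-\e_2$, which carries an area term; already for a geodesic semicircle of radius $\rho$ one has $\int_\a k=\pi\cosh\rho>\pi$. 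The quantity that genuinely stays in $[0,\pi]$ under the convexity hypothesis is an angle measured against a geodesic foliation (the paper's $\theta$, satisfying $\theta'=k+\sin\theta$), not the integral of $k$. Consequently the constraint set for your interpolation $\phi_u=(1-u)\tilde\phi+u\phi$ is misidentified, and the ``$\sin$ nonnegativity on $[0,\pi]$'' step you invoke for the sign of $L'(u)$ does not apply to the object you defined. Repairing this essentially forces you back to an angle variable of the paper's type, at which point the ODE comparison argument of the paper is the more direct path.
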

\begin{proof} Let us use the right half plane model for $\H^2$:
$$
\H^2=\{(x,y)\in \R^2|\ x>0\}
$$
with metric $ds^2=\frac{dx^2+dy^2}{x^2}.$
We may assume that $\gamma$ is given by $\gamma(t)=(t,c)$, $a\le t\le b$ and $c$ is a constant. We also assume that $\a$ is below $\gamma$. That is, if $\gamma(s)=(x(s),y(s))$, then $x(s)\le c$. We may assume that $\gamma$ touches the geodesic $(t,c')$ for some $c'$ at $\a(s_0)$ some $0<s_0<l$. Then $\a$ lies between the geodesics $y=c$ and $y=c'$. Move $\b$ such that $\b(s_0)$ touches $y=c'$ at $\b(s_0)$ and such that $\b$ lies above $y=c'$; i.e., $\b$ is in the region $y\ge c'$.

Let $\a(s)=(x(s),y(s))$ and $\b(s)=(\tilde x(s),\tilde y(s))$. Let $\theta(s)$ be the oriented angle from the tangent of $(t,y(s))$ to $\a'(s)$. Define $\tilde \theta(s)$ for $\beta$ similarly so that $\theta(s_0)=\tilde \theta(s_0)=0$.

Note that for any $l>s>s'>s_0$, $y(s)\neq y(s')$, otherwise the curve $(t,y(s))$ is part of $\a$ which is a geodesic. This is impossible, because $k>0$. Hence $y$ is increasing in $(s_0,l)$. So
\begin{equation}\label{e-1}
  x'=x\cos\theta, y'=x\sin\theta.
\end{equation}
Hence $\sin\theta\ge 0$. But for $s_0<s<l$, if $\sin \theta(s)=0$, then the geodesic $(t,y(s))$ is tangent to $\a$, which is impossible because of convexity of the region bounded by $\a$ and $\gamma$. So $\sin\theta>0$, there.

On the other hand, we have \cite[p. 253]{DoCarmo}:
$$
k=-\sin\theta+\theta'.
$$
Hence $0<\theta\le\pi$ on $(s_0,l)$. Similarly, we have
$$
\tilde k=-\sin\tilde \theta+\tilde\theta'.
$$
Since $k>\tilde k$ and $\theta(s_0)=\tilde\theta(s_0)=0$, so for $s>s_0$ near $s_0$, $\theta(s)>\tilde \theta(s)$. Suppose there is a first $l>s_1>s_0$ such that $\theta(s_1)=\tilde\theta(s_1)$. Then at $s_1$,
$$
k-\tilde k=\theta'(s_1)-\tilde\theta'(s_1)\le 0.
$$
This is impossible. Hence $0\le \tilde \theta(s)\le \theta(s)\le \pi$ in $(s_0,l)$.

Now
$$
\log x(l)-\log x(s_0)=\int_{s_0}^l \frac{x'}{x}ds=\int_{s_0}^l\cos\theta(s)ds
$$
and
$$
\log \tilde x(l)-\log \tilde x(s_0)=\int_{s_0}^l \frac{\tilde x'}{\tilde x}ds=\int_{s_0}^l\cos\tilde\theta(s)ds
$$
Hence $\log\tilde x(l)\ge \log x(l)=\log b$. Similarly, one can prove that $\log\tilde x(0)\le \log x(0)=\log c.$ In particular, $\tilde x(0)<\tilde x(l)$. Now the length $L(\gamma)$ of $\gamma$ is $\log b-\log c$. Hence
$L(\gamma)\le \log \tilde x(l)-\log\tilde x(0)$.

We claim that $\log \tilde x(l)-\log\tilde x(0)\le L(\sigma)$. We may assume $\tilde y(0)<\tilde y(l)$. Then $\log \tilde x(l)-\log\tilde x(0)$ is the length of the geodesic $(t,\tilde y(l))$, $\tilde x(0)<t<\tilde x(l)$. Then by the sine law in $\H^2$, we conclude that the claim is true. This completes the proof of the lemma.
\end{proof}
\begin{lem}\label{lem: curve} Let $\a$ be a closed geodesically convex curve in $\H^2$ with geodesic curvature $ k_\a> r>0 $. Let $\b$ be a geodesic circle with geodesic curvature $r$. Suppose $\a$ and $\b$ are tangent at $p$ such that $\a$ and $\b$ lie on the same side of the geodesic through $p$ and tangent to $\a$ and $\b$. Then $\a$ will lie inside $\b$.
\end{lem}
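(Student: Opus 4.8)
The plan is to argue by contradiction, using the hyperbolic Schur comparison (Lemma \ref{Schur-l1}) together with the second-order behaviour of the two curves at the point of tangency. Write $\overline D$ for the closed geodesic disc bounded by $\beta$, let $\rho_0$ be its radius (so $r=\coth\rho_0$), let $\ell$ be the common tangent geodesic at $p$, and let $c$ be the centre of $\beta$; by hypothesis $\overline D$, and hence $c$, lies on the same side of $\ell$ as $\alpha$. First I record the local picture: since $\alpha$ and $\beta$ are tangent at $p$, lie on the same side of $\ell$, and $\alpha$ has strictly larger geodesic curvature there than $\beta$, writing both curves as graphs over $\ell$ in geodesic normal coordinates and comparing to second order shows that a deleted neighbourhood of $p$ in $\alpha$ lies in the interior of $\overline D$. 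Consequently, if $\alpha\not\subset\overline D$, then travelling along $\alpha$ from $p$ the curve starts inside $\overline D$ and, being closed, must leave it; let $q$ be the first point of $\alpha$ after $p$ immediately past which $\alpha$ leaves $\overline D$. Then the sub-arc $A$ of $\alpha$ from $p$ to $q$ lies in $\overline D$, both of its endpoints lie on $\beta$, and $\alpha$ crosses $\beta$ transversally at $q$ — for if $\alpha$ were tangent to $\beta$ at $q$, the same second-order comparison (using $k_\alpha>r$) would force $\alpha$ back into the interior of $\overline D$ on both sides of $q$, contradicting that $q$ is an exit point.

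Next, compare $A$ with an arc of $\beta$. Parametrising $A$ by arclength, with length $L$, let $B$ be the sub-arc of $\beta$ of length $L$ issuing from $p$ in the same rotational sense as $A$: then $A$ and $B$ have equal length, $B$ has constant geodesic curvature $r$, and $k_\alpha>r$ along $A$. After checking that $A$ together with its geodesic chord bounds a geodesically convex region, and likewise for $B$ (here one uses that the total turning of $B$, namely $rL$, is at most $\pi$, subdividing $A$ into shorter pieces if necessary), Lemma \ref{Schur-l1} yields that the chord $\overline{pq}$ is strictly shorter than the chord of $B$. But $p$ and $q$ both lie on $\beta$, so $|\overline{pq}|$ is the length of the chord subtended by the sub-arc of $\beta$ from $p$ to $q$; since on a geodesic circle the chord length is strictly increasing in sub-arc length up to half the circumference, that sub-arc of $\beta$ is strictly shorter than $B$, hence strictly shorter than $A$. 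Performing this comparison on each connected component of $\overline\Omega_\alpha\setminus\overline D$ and of $\overline D\setminus\overline\Omega_\alpha$ — each a lens bounded by an arc of $\alpha$ and an arc of $\beta$ sharing their endpoints, the $\alpha$-arc being the more curved — and combining the resulting length inequalities with the Gauss–Bonnet identity $\int k\,ds=2\pi+\mathrm{Area}$ applied to $\overline\Omega_\alpha$ and to $\overline D$, leads to a contradiction with the assumption $\alpha\not\subset\overline D$.

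The main obstacle is this last, global step. Two strictly convex curves in $\H^2$ may meet in more than two points, so the symmetric-difference pieces can be numerous, and for each one must check that its two bounding arcs genuinely share their endpoints (so the piece is lens-shaped, not annular), that Lemma \ref{Schur-l1} applies on it after any needed subdivision, and that the signs in the summed relations combine into a true contradiction rather than a merely consistent estimate. Should this accounting prove too delicate, a cleaner self-contained route is to argue directly along $\alpha$: with $\alpha$ parametrised by arclength $s$ from $p$, set $u(s)=d_{\H^2}\!\big(c,\alpha(s)\big)$ and let $\phi(s)$ be the signed deviation of $\alpha'(s)$ from perpendicularity to the radial geodesic from $c$; then $u'=-\sin\phi$ and $\phi'=k_\alpha(s)-\coth\!\big(u(s)\big)\cos\phi$ with $u(0)=\rho_0$, $\phi(0)=0$, and one must show, exploiting that $k_\alpha>r=\coth\rho_0$ uniformly (by compactness of $\alpha$), that $u(s)\le\rho_0$ for all $s$ — which is precisely the assertion $\alpha\subset\overline D$.
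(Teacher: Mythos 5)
Your first route is essentially the paper's proof, and you already have every ingredient needed; the one place you hesitate --- the ``global accounting'' over all components of the symmetric difference, with Gauss--Bonnet applied to the full regions $\overline\Omega_\alpha$ and $\overline D$ --- is exactly the step you should drop, because the single lens at the first crossing already yields the contradiction. Concretely: with $q$ the first point after $p$ at which $\alpha$ meets $\beta$ again (chosen, as in the paper, in whichever rotational direction makes the $\beta$-arc from $p$ to $q$ at most a semicircle), apply Gauss--Bonnet to the region bounded by the $\alpha$-arc $A$ and the $\beta$-arc $B'$ from $p$ to $q$; since $\Omega$ lies outside $\Omega_\alpha$ and inside $\overline D$, the $\alpha$-arc contributes $-\int_A k_\alpha\,ds$ and the $\beta$-arc contributes $+r|B'|$, and as $K=-1$ and the two exterior angles sum to at most $2\pi$ one gets $r|B'|>\int_A k_\alpha\,ds>r|A|$, i.e.\ $|B'|>|A|$. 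This is the exact reverse of what you derived from Lemma \ref{Schur-l1} together with chord monotonicity on the circle (namely $|B'|<|A|$), so the proof ends there; no summation over lenses is needed. Two of your side worries also evaporate: the convexity hypotheses of Lemma \ref{Schur-l1} are automatic, since a sub-arc of a closed convex curve together with its geodesic chord bounds the intersection of the convex region with a geodesic half-plane, which is geodesically convex (so no subdivision by total turning is required); and the ``lens versus annulus'' issue does not arise because only the first-crossing lens is used. Your alternative ODE route (tracking $d_{\H^2}(c,\alpha(s))$ and the angle to the radial direction) is a genuinely different and plausible approach, but it is not what the paper does and would need its own comparison argument to rule out $u(s)>\rho_0$.
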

\begin{proof} We use the disk model for $\H^2$. We may assume that $\b$ is a Euclidean circle with center at the origin and with radius $a>0$, say. We may also assume that $p=(0,-a)$ and $\b$ is parametrized by $(a\cos\theta,a\sin\theta)$, $-\pi\le \theta\le\pi$.
 It is easy to see that $\b(\theta)$ is outside $\a$ near $p$,   for $\theta \in(-\frac\pi2-\theta_0,-\frac\pi2+
 \theta_0)=I$ for some $\theta_0>0$. Suppose the lemma is not true. Then $\b$ will intersect $\a$ at some $\theta_1\notin I$. Without loss of generality, we may assume that there is $\frac\pi 2\ge\theta_1\ge -\frac\pi2+
 \theta_0$, such that $\alpha$ and $\beta$ intersects at $q=\b(\theta_1)$ and $\b(\theta)$ lies strictly outside $\a$ in $(-\frac\pi2+
 \theta_0,\theta_1)$. Then the length of $\b$ from $p$ to $q$ is strictly larger than the length of $\a$ from $p$ to $q$ by the Gauss-Bonnet theorem and the fact that $k_\a>r$.  Then there is $\theta_1>\theta_2>-\frac\pi2+
 \theta_0$ such that the length of $\b$ from $p$ to $u=\b(\theta_2)$ is the same as the length   of $\a$ from $p$ to $q$. By Lemma \ref{Schur-l1}, we conclude that $d(p,q)\le d(p,u)$. Since $p,q,u$ are on the geodesic circle $\b$, this is impossible by the cosine law in $\H^2$.

\end{proof}
\begin{proof}[Proof of Proposition \ref{prop: outer}]
Let $p\in \Sigma$. Let $S$ be the geodesic sphere with radius $a$ which is tangent to $\Sigma$ at $p$ with the same unit outward normal  at $p$. Let $P$ be any normal section. That is, $P$ is the totally geodesic $\H^2$ which passes through $p$ and  contains the geodesic normal to $\Sigma$ (and $S$) at $p$. Let $\gamma=P\cap \Sigma$ and $\beta=P\cap S$.

Since the principal curvature of $\Sigma$ is larger than $\coth a$, $\gamma$ is a closed convex curve in $P$ with geodesic curvature larger than $\coth a$. $\beta$ is a geodesic circle of radius $a$ in $P$. By Lemma \ref{lem: curve}, $\gamma$ lies inside $\beta$ and hence is inside $S$. Since $P$ is an arbitrary normal section, the result follows.
\end{proof}

\section{Normalized embedding of $(S_r,\gamma_r)$}\label{sec: emb}

Let $(M^3,g)$ be an AH manifold satisfying Assumption A. Let $(S_r,\gamma_r)$ be as in   Lemma \ref{lem: R}. The isometric embedding of $(S_r,\gamma_r)$ is unique up to an isometry of $\H^3$. In order to prove the main results, we have to normalize the embedding. As a first step, using Lemmas \ref{lem: R} and  \ref{cor: H_0 upper bdd}, we can apply Propositions \ref{prop: inner} and \ref{prop: outer} to obtain the following:
\begin{lem}\label{lem: embed}
With the above assumptions and notations, we can find a positive constant $C$ such that for each small $r$, if $\Sigma_r$ is the isometric embedding of $(S_r,\gamma_r)$ in $\H^3$, then there exist geodesic balls $B_{in}$ and $B_{out} $ with the same center and radii $\rho_{in}$ and $\rho_{out}$ respectively, such that $B_{in}$ is in the interior of $\Sigma_r$, $B_{out}$ contains $\Sigma_r$ and $\rho_{in}$, $\rho_{out}$ satisfy:
\begin{equation}\label{eq: in and out}
\rho_{in}\ge \sigma -Cr^3, \rho_{out}\le \sigma+Cr^3,
\end{equation}
where $\sigma=\sigma(r)>0$ is given by $\sinh \sigma=\frac{1}{\sinh r}$.
\end{lem}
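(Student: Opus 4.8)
The plan is to estimate the principal curvatures $\lambda_1,\lambda_2$ of the embedded convex surface $\Sigma_r$, feed those estimates into Propositions \ref{prop: inner} and \ref{prop: outer} to produce an inscribed and a circumscribed geodesic ball, and then move the two balls to a common center while controlling the radii. The number $\sigma$ enters through the identity $\coth\sigma=\cosh r$: since $\sinh\sigma=1/\sinh r$ one computes $\cosh\sigma=\cosh r/\sinh r$, hence $\coth\sigma=\cosh r$ and $\operatorname{csch}^2\sigma=\sinh^2 r$. Thus the geodesic sphere of radius $\sigma$ in $\H^3$ is totally umbilic with every principal curvature equal to $\cosh r$, and the content of the lemma is that $\Sigma_r$ is squeezed, in the radial distance measured from a suitable point, to within $O(r^3)$ of such a sphere.

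First I would estimate the principal curvatures of $\Sigma_r$. By the Gauss equation for a surface in $\H^3$ (already used in Section \ref{sec: curv}), $\lambda_1\lambda_2=K+1$, and by Lemma \ref{lem: R}, $K=\tfrac12R=\sinh^2 r+O(r^5)$, so $\lambda_1\lambda_2=\cosh^2 r+O(r^5)$. For the trace, $\lambda_1+\lambda_2=H_0$, and Corollary \ref{cor: H_0 upper bdd} together with Lemma \ref{lem: R} and the bound $\Delta R/R=O(r^5)$ proved inside Lemma \ref{lem: H_0} gives $H_0^2\le4\cosh^2 r+O(r^5)$ on $S_r$, while the Gauss equation gives $H_0^2=(\lambda_1-\lambda_2)^2+4(K+1)\ge4(K+1)=4\cosh^2 r+O(r^5)$, a matching lower bound; hence $H_0=2\cosh r+O(r^5)$ uniformly on $S_r$. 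Writing $\lambda_1,\lambda_2$ as the roots of $t^2-H_0t+(K+1)=0$, I would extract from these two estimates the uniform bound $|\lambda_i-\coth\sigma|=O(r^5)$ on $S_r$; in particular $\lambda_i>1$ for $r$ small, so the hypotheses of Propositions \ref{prop: inner} and \ref{prop: outer} hold. Set $\lambda^+=\max_{S_r}\lambda_i$ and $\lambda^-=\min_{S_r}\lambda_i$, so $\lambda^\pm=\coth\sigma+O(r^5)$.

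Next I would invoke the enclosure results. Since $\lambda^+\ge\lambda^->1$, Proposition \ref{prop: inner} gives a geodesic ball $B_{in}=B(p,\rho_{in})$ contained in the interior of $\Sigma_r$ with $\coth\rho_{in}=\lambda^+$; and since $\lambda_i>\coth a$ for every $a$ with $\coth a<\lambda^-$, Proposition \ref{prop: outer} gives a geodesic ball $B(q,a)$ containing $\Sigma_r$, for any $a$ slightly larger than $\operatorname{arccoth}\lambda^-$. To compare $\rho_{in}$ and $a$ with $\sigma$, note that $\coth$ is strictly decreasing with $(\coth)'=-\operatorname{csch}^2$, and $\operatorname{csch}^2$ stays comparable to $\operatorname{csch}^2\sigma=\sinh^2 r$ on a fixed neighborhood of $\sigma$; by the mean value theorem a deviation of size $\varepsilon$ of a principal curvature from $\coth\sigma$ produces a deviation of size $\asymp\varepsilon/\sinh^2 r$ in the corresponding radius from $\sigma$. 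Feeding in $\varepsilon=O(r^5)$ and $\sinh^2 r\asymp r^2$, we get $\rho_{in}\ge\sigma-Cr^3$ and may choose $a\le\sigma+Cr^3$.

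Finally I would make the two balls concentric. Since $B(p,\rho_{in})\subset\operatorname{int}(\Sigma_r)\subset B(q,a)$, and nesting of geodesic balls in $\H^3$ forces $d(p,q)+\rho_{in}\le a$ (extend the geodesic from $q$ through $p$ to a point at distance $\rho_{in}$ from $p$), we obtain $d(p,q)\le a-\rho_{in}$, whence $\Sigma_r\subset B(q,a)\subset B\bigl(p,a+d(p,q)\bigr)\subset B(p,2a-\rho_{in})$. Taking $p$ as the common center and $\rho_{out}=2a-\rho_{in}$, the bounds just obtained give $\rho_{in}\ge\sigma-Cr^3$ and $\rho_{out}\le\sigma+3Cr^3$, which is \eqref{eq: in and out} after renaming the constant. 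The step I expect to be the main obstacle is the sharp almost-umbilicity estimate $|\lambda_i-\coth\sigma|=O(r^5)$ in the first paragraph together with its use in the radius comparison: because the curvature-to-radius map near $\sigma$ amplifies errors by the large factor $\sinh^2\sigma=1/\sinh^2 r\sim r^{-2}$, one must push the principal-curvature estimate to order $r^5$ rather than settle for the cruder bound that merely validates the hypotheses of Propositions \ref{prop: inner} and \ref{prop: outer}, and this uses the full strength of the $O(r^5)$ error terms in Lemmas \ref{lem: R} and \ref{lem: H_0}; by contrast, the rolling/enclosure geometry of Propositions \ref{prop: inner} and \ref{prop: outer} and the concentricity argument above are routine.
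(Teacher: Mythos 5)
Your proposal follows the paper's proof of this lemma almost verbatim: the paper also combines the Gauss equation with Lemmas \ref{lem: R} and \ref{lem: H_0} to get almost-umbilicity of $\Sigma_r$, asserts $\lambda_j=\cosh r+O(r^5)$, converts this into $\operatorname{arccoth}\lambda_j=\sigma+O(r^3)$ by the same $\log\frac{\lambda+1}{\lambda-1}$ computation (your mean-value-theorem version with the $\operatorname{csch}^2\sigma\sim r^{-2}$ amplification is the same estimate), and then invokes Propositions \ref{prop: inner} and \ref{prop: outer}. Your explicit concentricity argument ($d(p,q)+\rho_{in}\le a$, hence $\Sigma_r\subset B(p,2a-\rho_{in})$) fills in a step the paper dismisses as ``easy to see,'' and it is correct.

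The one step you flag as the main obstacle is indeed the weak point, and your proposed derivation of it does not close: from $\lambda_1+\lambda_2=2\cosh r+O(r^5)$ and $\lambda_1\lambda_2=\cosh^2 r+O(r^5)$ one gets $(\lambda_1-\lambda_2)^2=H_0^2-4(K+1)=O(r^5)$, hence only $|\lambda_i-\cosh r|=O(r^{5/2})$, not $O(r^5)$; there is no way to avoid the square root when passing from sum-and-product control to control of the individual roots. After the $r^{-2}$ amplification in the curvature-to-radius map this yields only $\rho=\sigma+O(r^{1/2})$ rather than $\sigma+O(r^3)$. You should be aware that the paper's own proof asserts $\lambda_j=\cosh r+O(r^5)$ at exactly this point with no further justification, so your write-up reproduces the published argument, including this gap; you were right to single it out. (For what it is worth, an $O(r^{1/2})$ or even $o(1)$ bound in \eqref{eq: in and out} would still suffice for the way the lemma is used in Lemma \ref{lem: spherical distance} and Theorem \ref{thm: main}, so the downstream results are not endangered, but as a proof of the stated $O(r^3)$ bound the argument is incomplete as written.)
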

\begin{proof}
Let $r$ be a fixed small number. Let $\lambda_j(x)$ be the principal curvatures of $x\in\Sigma_r$. By Lemmas \ref{lem: R} and \ref{lem: H_0} and the Gauss equation,
 it is easy to see that
\begin{equation}\label{eq: lambda}
\lambda_j=\cosh r+O(r^5).
\end{equation}
Let $\coth \rho=\lambda_j$, then
\begin{equation*}
 \begin{split}
\rho=&\frac{1}{2}\log(\frac{\lambda_j+1}{\lambda_j-1})\\
=& \frac{1}{2}\log(\frac{\cosh r+1+O(r^5)}{\cosh r-1+O(r^5)})\\
=& \frac12  \log(\frac{\cosh r+1 }{\cosh r-1 })+O(r^3)\\
=& \sigma +O(r^3).
\end{split}
\end{equation*}
 From this and Propositions \ref{prop: inner} and \ref{prop: outer}, it is easy to see the corollary is true.
\end{proof}

By Lemma \ref{lem: embed}, the first normalization of the embedding is to normalize such that the center of the geodesic balls in Lemma \ref{lem: embed} is at a fixed point $o\in \H^3$. We will use geodesic polar coordinates   $(\sigma,y)$ with center at $o$, where $\sigma$ is the geodesic distance from $o$ and $y\in \S^2$ so that a point in $\H^2$ is of the form $\exp_{o}(\sigma y)$. The metric $g_{\H^2}$ is given by $d\sigma^2+\sinh^2\sigma~ g_0$ where $g_0$ is the standard metric on $\S^2$.

The isometric embedding $X^{(r)}$ is given by $X^{(r)}(x)=\exp_o(\sigma^{(r)}(x)y^{(r)}(x))$.

\begin{lem}\label{lem: spherical distance} With the above notations, there exists a constant $C>0$ such that for all  $r$ small enough,
\begin{equation*}
  \lf |d_{\S^2}(x_1,x_2)-d_{\S^2}\lf(y^{(r)}(x_1), y^{(r)}(x_2)\ri)\ri|\le C r^3
\end{equation*}
for $x_1, x_2\in \S^2$, where $d_{\S^2}$ is the distance on $\S^2$ with respect to the standard metric.
\end{lem}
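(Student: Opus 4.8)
The plan is to exhibit $\Sigma_r$ as a radial graph over $\S^2$ centered at $o$ and to show that, both on the domain $\S^2$ (carrying $\gamma_r$) and on the target $\S^2$ (carrying the induced metric of this graph), the metric is uniformly $(1+O(r^3))$-close to the round metric $\sinh^2\sigma\,g_0$. Since $X^{(r)}$ is an isometry between these two spheres whose angular part is $y^{(r)}$, the distance comparison will then follow by dividing out $\sinh\sigma$.

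Because $\Sigma_r$ is strictly convex and $o$ lies in its interior (being the common center of $B_{in}$ and $B_{out}$ in Lemma~\ref{lem: embed}), every geodesic ray from $o$ meets $\Sigma_r$ exactly once, so $\Sigma_r=\{\exp_o(\rho(y)y):y\in\S^2\}$ for a smooth function $\rho$ on $\S^2$ with $\rho_{in}\le\rho\le\rho_{out}$; thus $|\rho-\sigma|\le Cr^3$ by Lemma~\ref{lem: embed}. Writing the hyperbolic metric in geodesic polar coordinates as $dt^2+\sinh^2 t\,g_0$, the pullback by $y\mapsto\exp_o(\rho(y)y)$ of the induced metric of $\Sigma_r$ is $q=\sinh^2\rho\,g_0+d\rho\otimes d\rho$, and a routine computation gives $|\n_{g_0}\rho|_{g_0}=\sinh\rho\,|\tan\phi|$, where $\phi=\phi(y)$ is the angle between the outward unit normal of $\Sigma_r$ at $\exp_o(\rho(y)y)$ and $\partial_t$. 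On the domain side, $\gamma_r=\sinh^{-2}(r)\,g_r=\sinh^2\sigma\,(g_0+\tfrac{r^3}{3}h+e)$; since $h$ is a fixed smooth tensor on $\S^2$ and $e=O(r^4)$, we get $\gamma_r=\sinh^2\sigma\,(g_0+E_1)$ with $\sup_{\S^2}|E_1|_{g_0}=O(r^3)$. So the real task is to prove the analogous estimate for $q$, for which the one missing ingredient is a bound on $|\n_{g_0}\rho|$.

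To bound $\phi$, fix $p=\exp_o(\rho(y)y)\in\Sigma_r$ and work inside the totally geodesic $\H^2\subset\H^3$ spanned by $o$, $p$, and the geodesic normal to $\Sigma_r$ at $p$; let $\ell=T_p\Sigma_r\cap\H^2$, which is a supporting line of the convex curve $\Sigma_r\cap\H^2$ at $p$. Since $B_{in}$ lies in the convex body bounded by $\Sigma_r$, it lies entirely on one side of the supporting plane $T_p\Sigma_r$, whence $d(o,\ell)=d(o,T_p\Sigma_r)\ge\rho_{in}$, while $d(o,p)=\rho(y)\le\rho_{out}$. The hyperbolic law of sines applied to the right triangle with vertices $o$, $p$, and the foot of the perpendicular from $o$ to $\ell$ (whose angle at $p$ is $\tfrac\pi2-\phi$) yields $\cos\phi=\dfrac{\sinh d(o,\ell)}{\sinh d(o,p)}\ge\dfrac{\sinh\rho_{in}}{\sinh\rho_{out}}$. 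As $\sinh\sigma=1/\sinh r$ forces $\coth\sigma=\cosh r=1+O(r^2)$, Lemma~\ref{lem: embed} together with an elementary expansion gives $\dfrac{\sinh(\sigma-Cr^3)}{\sinh(\sigma+Cr^3)}\ge 1-C'r^3$, hence $1-\cos\phi\le C'r^3$ and $|\tan\phi|\le C''r^{3/2}$. Since $\sinh\rho\le\sinh(\sigma+Cr^3)\le 2/r$ for $r$ small, we get $|\n_{g_0}\rho|_{g_0}=\sinh\rho\,|\tan\phi|=O(r^{1/2})$, so $d\rho\otimes d\rho\le|\n_{g_0}\rho|_{g_0}^2\,g_0=O(r)\,g_0=\sinh^2\sigma\cdot O(r^3)\,g_0$, using $\sinh^2\sigma=\sinh^{-2}r=O(r^{-2})$.

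Combined with $\sinh^2\rho=\sinh^2\sigma\,(1+O(r^3))$ (from $|\rho-\sigma|\le Cr^3$ and $\coth\sigma=\cosh r$), this gives $q=\sinh^2\sigma\,(g_0+E_2)$ with $\sup_{\S^2}|E_2|_{g_0}=O(r^3)$. Hence both $\gamma_r$ on the domain $\S^2$ and $q$ on the target $\S^2$ are squeezed between $(1-Cr^3)\sinh^2\sigma\,g_0$ and $(1+Cr^3)\sinh^2\sigma\,g_0$, so, as $\mathrm{diam}(\S^2,g_0)=\pi$, we have $d_{\gamma_r}(x_1,x_2)=\sinh\sigma\,d_{g_0}(x_1,x_2)+O(r^3\sinh\sigma)$ for all $x_1,x_2$, and likewise $d_{\Sigma_r}\bigl(\exp_o(\rho(y_1)y_1),\exp_o(\rho(y_2)y_2)\bigr)=\sinh\sigma\,d_{g_0}(y_1,y_2)+O(r^3\sinh\sigma)$, $d_{\Sigma_r}$ being the intrinsic distance of $\Sigma_r$. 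Since $X^{(r)}\colon(S_r,\gamma_r)\to\Sigma_r$ is an isometry and $X^{(r)}(x)=\exp_o(\sigma^{(r)}(x)y^{(r)}(x))=\exp_o\bigl(\rho(y^{(r)}(x))\,y^{(r)}(x)\bigr)$, setting $y_i=y^{(r)}(x_i)$ in the two displayed asymptotics and equating $d_{\gamma_r}(x_1,x_2)=d_{\Sigma_r}(X^{(r)}(x_1),X^{(r)}(x_2))$, then dividing by $\sinh\sigma$, yields $|d_{g_0}(x_1,x_2)-d_{g_0}(y^{(r)}(x_1),y^{(r)}(x_2))|\le Cr^3$. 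The main obstacle is the third paragraph: converting the two-sided pinching of $\Sigma_r$ by concentric geodesic spheres of radii $\sigma\pm Cr^3$ into the pointwise bound $\phi=O(r^{3/2})$ for the deviation of the normal from the radial direction via convexity and hyperbolic trigonometry; everything after that is bookkeeping with the polar form of the two metrics.
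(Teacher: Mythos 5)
Your proof is correct, but it follows a genuinely different route from the paper's. The paper never writes $\Sigma_r$ as a radial graph: it compares the intrinsic distance $l(x_1,x_2)$ on $(S_r,\gamma_r)$ directly with spherical distances by (a) expanding $l$ on the domain side from the form of $\gamma_r$, and (b) on the target side using the characterization of intrinsic distance on a convex surface as the infimum of lengths of curves in the exterior region, together with the two pinching spheres $\pp B_o(R_i)$, $R_i=\sigma+O(r^3)$, and radial projection onto them; dividing by $\sinh\sigma$ then converts the resulting $O(r^2)$ error in $l$ into the desired $O(r^3)$ error in $d_{\S^2}$. You instead prove the stronger statement that both $\gamma_r$ and the pulled-back induced metric $q$ of the radial graph are uniformly $(1+O(r^3))$-pinched against $\sinh^2\sigma\,g_0$, the key new ingredient being the support-function estimate $\cos\phi=\sinh d(o,\ell)/\sinh d(o,p)\ge\sinh\rho_{in}/\sinh\rho_{out}$, which converts the $\sigma\pm Cr^3$ pinching of Lemma \ref{lem: embed} into $\phi=O(r^{3/2})$ and hence $|\nabla_{g_0}\rho|=O(r^{1/2})$, so that $d\rho\otimes d\rho$ is negligible at the $\sinh^2\sigma\cdot O(r^3)$ level. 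Your approach buys a quantitative $C^0$ comparison of metrics (not just of distances) and avoids the slightly delicate exterior-curve/projection step of the paper, at the cost of the hyperbolic-trigonometry angle bound and of needing $\rho$ to be (at least Lipschitz) differentiable, which holds here since the Pogorelov embedding of the smooth positively curved $(S_r,\gamma_r)$ is smooth and strictly convex with $o$ in its interior. Both arguments rest on the same two pillars: convexity of $\Sigma_r$ and the $\sigma\pm Cr^3$ pinching of Lemma \ref{lem: embed}.
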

\begin{proof} Let $x_1, x_2\in \S^2$ and let $X^{(r)}$ as above so that the embedded image $\Sigma_r$ lies between two concentric geodesic spheres $\pp B_o(R_1)$ and $\pp B_o(R_2)$ with center at $o$ and with   radii $R_1>R_2$ such that $R_i=\sigma+O(r^3)$, $i=1, 2$, and $\sigma$ is given by $\sinh\sigma=\frac{1}{\sinh r}$, by Lemma \ref{lem: embed}. Here and below $O(r^k)$ will denote a quantity with absolute value bounded by $Cr^k$ for some positive constant $C$ independent of $r$ and $x_1, x_2\in \S^2$.

 Let $l(x_1, x_2)$ be the intrinsic distance between $x_1, x_2\in S_r$ with respect to the metric $\gamma_r$. By the definition of AH manifold, it is easy to see that
 \begin{equation}\label{eq: l(x,y)domain}
    l(x_1, x_2)=\frac{1}{\sinh r}d_{\S^2}(x_1, x_2)\lf(1+O(r^3)\ri).
 \end{equation}
 On the other hand,   let $v_1, v_2$ be the points of intersections  of $\pp B_o(R_2)$ with   the geodesics  from $o$ to $X^{(r)}(x_1)$ and $X^{(r)}(x_2)$ respectively. Since $X^{(r)}$ is an isometric embedding, the intrinsic distance between $X^{(r)}(x_1)$ and $X^{(r)}(x_2)$ in $\Sigma_r$ is equal to $l(x_1, x_2)$. Since $\Sigma_r$ is strictly convex in $\H^3$ by \eqref{eq: lambda} and $R_i=\sigma+O(r^3)$,  we have
 $$
 l(x_1, x_2)\le d_{\pp B_o(R_2)}(v_1, v_2)+O(r^3)
 $$
 because $l(x_1, x_2)$ is the minimum of lengths of curves in $\H^3$ outside $\Sigma_r$ which join $X^{(r)}(x_1)$ and $X^{(r)}(x_2)$. Here $d_{\pp B_o(R_2)}$ is the intrinsic distance function on $\pp B_o(R_2)$. So we have
 \begin{equation*}
    l(x_1, x_2)\le \sinh \sigma d_{\S^2}\lf(y^{(r)}(x_1),y^{(r)}(x_2)\ri)+O(r^2).
 \end{equation*}
Using the fact that $\pp B_o(R_1)$ is also strictly convex, one can prove similarly,
\begin{equation*}
    l(x_1, x_2)\ge   \sinh \sigma d_{\S^2}\lf(y^{(r)}(x_1),y^{(r)}(x_2)\ri)+O(r^2).
 \end{equation*}
Combining these two inequalities we have:
\begin{equation}\label{eq: l(x,y)target}
    l(x_1, x_2)= \sinh \sigma d_{\S^2}\lf(y^{(r)}(x_1),y^{(r)}(x_2)\ri)+O(r^2).
 \end{equation}
By \eqref{eq: l(x,y)domain}, \eqref{eq: l(x,y)target} and the fact that $\sinh\sigma=\frac1{\sinh r}$, the result follows.
\end{proof}

Let $X^{(r)}$ be the isometric embeddings normalized as above.

\begin{lem}\label{lem: sphere} With the above notations, by composing $X^{(r)}$ with isometries of $\H^3$ fixing $o$, and the resulting isometric embeddings still denoted by $X^{(r)}$, we have:
$$\lim_{r\ra0} y^{(r)}(x)=x,\quad x\in \mathbb{S}^2.$$
The convergence is uniform in $x$.
\end{lem}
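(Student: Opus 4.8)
The plan is to view the angular maps $y^{(r)}:\S^2\ra\S^2$ --- the ``direction seen from $o$'' component of $X^{(r)}$ in the geodesic polar coordinates of Section \ref{sec: emb} --- as almost-isometries of the round sphere, to extract a genuine isometry from them by a compactness argument, and to normalize by that isometry. First I would observe that each $y^{(r)}$ is continuous: by Lemma \ref{lem: embed} the point $o$ lies in the interior $B_{in}$ of the strictly convex surface $\Sigma_r$, so $X^{(r)}(x)\neq o$ for all $x$ and $x\mapsto y^{(r)}(x)$ is well defined and continuous. By Lemma \ref{lem: spherical distance} there is $C>0$ with
$$\lf|d_{\S^2}(x_1,x_2)-d_{\S^2}\lf(y^{(r)}(x_1),y^{(r)}(x_2)\ri)\ri|\le Cr^3$$
for all $x_1,x_2\in\S^2$ and all small $r$.

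Since $Cr^3\ra0$, the $y^{(r)}$ (for $r$ small) are equicontinuous with a common modulus of continuity and the target $\S^2$ is compact, so by the Arzel\`{a}--Ascoli theorem every sequence $r_k\ra0$ has a subsequence $r_{k_j}\ra0$ along which $y^{(r_{k_j})}$ converges uniformly to a continuous map $y:\S^2\ra\S^2$; letting $r=r_{k_j}\ra0$ in the inequality above and using continuity of $d_{\S^2}$ shows $d_{\S^2}(y(x_1),y(x_2))=d_{\S^2}(x_1,x_2)$, so $y$ preserves the round distance. A distance-preserving self-map of $\S^2\subset\R^3$ is the restriction of an element of $O(3)$: the round distance determines the Euclidean inner product of position vectors, so $\langle y(x_1),y(x_2)\rangle=\langle x_1,x_2\rangle$; taking an orthonormal basis $e_1,e_2,e_3\in\S^2$, the triple $y(e_1),y(e_2),y(e_3)$ is again orthonormal, and for $x=\sum_ic_ie_i\in\S^2$ one gets $\langle y(x),y(e_i)\rangle=c_i$, hence $y(x)=\sum_ic_iy(e_i)=Ax$ where $A\in O(3)$ is the linear map with $Ae_i=y(e_i)$. (Note this never uses bijectivity of $y^{(r)}$ itself.) Composing $X^{(r)}$ with the isometry of $\H^3$ fixing $o$ whose action on the unit directions at $o$ is $A^{-1}\in O(3)$ replaces $y^{(r)}$ by $A^{-1}\circ y^{(r)}$, which along the subsequence converges uniformly to $A^{-1}A|_{\S^2}=\mathrm{id}_{\S^2}$.

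To obtain a single choice valid for all small $r$ rather than merely along a subsequence, I would set
$$\delta(r)=\min_{A\in O(3)}\ \max_{x\in\S^2}d_{\S^2}\lf(A^{-1}y^{(r)}(x),x\ri),$$
the minimum being attained since $O(3)$ is compact and the inner maximum depends continuously on $A$. The previous paragraph shows that every sequence $r_k\ra0$ has a subsequence along which $\delta\ra0$, hence $\delta(r)\ra0$ as $r\ra0$. Choosing, for each small $r$, an $A_r\in O(3)$ realizing $\delta(r)$ and replacing $X^{(r)}$ by its composition with the corresponding isometry of $\H^3$ fixing $o$, the new angular map satisfies $\max_{x\in\S^2}d_{\S^2}(y^{(r)}(x),x)=\delta(r)\ra0$, which is precisely the asserted uniform convergence $y^{(r)}(x)\ra x$. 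The one point requiring care is the passage from the approximate-isometry estimate of Lemma \ref{lem: spherical distance} to a genuine element of $O(3)$, together with this subsequence-to-all-$r$ step; compactness dispatches both, at the cost of giving no rate of convergence, which is all that Theorem \ref{thm: main} requires. No continuous dependence of $A_r$ on $r$ is needed, since the conclusion is a statement about each $r$ separately.
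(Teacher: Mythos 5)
Your argument is correct, and it organizes the proof genuinely differently from the paper's. Both proofs rest entirely on the almost-isometry estimate of Lemma \ref{lem: spherical distance}, but the paper chooses the normalizing rotation \emph{a priori} and explicitly for every $r$: it composes with the element of $O(3)$ forcing $y^{(r)}(e_1)=e_1$, $y^{(r)}(e_2)\in\{x^3=0,\ x^2\ge 0\}$ and $y^{(r)}(e_3)\in\{x^3\ge 0\}$, deduces $y^{(r)}(e_i)\ra e_i$ from the distance estimate, identifies the subsequential limits of $y^{(r)}(x)$ for general $x$ by their distances to the three $e_i$, and finally gets uniformity by a triangle-inequality/compactness argument. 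You extract the rotation \emph{a posteriori}: Arzel\`a--Ascoli produces subsequential uniform limits, each limit is distance-preserving and hence (as you correctly argue, without needing surjectivity of $y^{(r)}$) the restriction of some $A\in O(3)$, and the optimal rotation realizing $\delta(r)$ then normalizes each individual $r$, with $\delta(r)\ra 0$ by the sub-subsequence principle. The paper's route is more elementary and its normalization is visibly of the form described in Remark \ref{rem}; yours avoids the bookkeeping of pinning down three points, at the cost of invoking Arzel\`a--Ascoli and a variational selection of $A_r$, and like the paper's it yields no rate of convergence, which is all Theorem \ref{thm: main} requires. Two small points to tighten, neither affecting correctness: equicontinuity should be asserted along a fixed sequence $r_k\ra 0$ (all but finitely many terms satisfy $Cr_k^3<\e/2$, and the remaining finitely many maps are individually uniformly continuous), which is what your Arzel\`a--Ascoli application actually uses; and one should note that the normalizing isometry fixes $o$, so the inscribed and circumscribed balls of Lemma \ref{lem: embed} remain centered at $o$ and the estimates on $\sigma^{(r)}$ used in the proof of Theorem \ref{thm: main} survive the normalization.
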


\begin{proof}   $x\in\S^2$ is of the form $x=(x^1,x^2,x^3)$ with $\displaystyle\sum_i(x^i)^2=1$. Let $e_1=(1,0,0)$, $e_2=(0,1,0)$ and $e_3=(0,0,1)$.
By composing with isometries of $\H^3$ fixing  $o$, we may arrange that for all $r$
\begin{equation}\label{eq: normalize}
y^{(r)}(e_1)=e_1, y^{(r)}(e_2)  \in \{x^3=0, x^2\geq 0\},  y^{(r)}(e_3)\in \{x^3\geq 0\}.
\end{equation}

By Lemma \ref{lem: spherical distance},
$$
d_{\S^2}(y^{r}(e_2),e_1)=d_{\S^2}(y^{r}(e_2),y^{(r)}(e_1))= d_{\S^2}(e_2,e_1)+O(r^3).
$$
By \eqref{eq: normalize}, we can conclude that $\displaystyle\lim_{r\to 0}y^{(r)}(e_2)=e_2$. For any $r_n\to0$ such that $y^{(r_n)}(e_3)\to a=(a^1,a^2,a^3)$ with $a^3\ge0$. Then by Lemma \ref{lem: spherical distance} again, we have
$$
d_{\S^2}(e_1,a)=d_{\S^2}(e_2,a)=\frac\pi2.
$$
Hence $a=e_3$. This implies that $\displaystyle\lim_{r\to 0}y^{(r)}(e_3)=e_3$. That is, we have
\begin{equation}\label{eq: e_i limit}
   \lim_{r\to\infty}y^{(r)}(e_i)=e_i,\ \text{ for $1\le i\le 3$}.
\end{equation}

Now for any $x\in \S^2$ and $r_n\to0$ such that $\displaystyle\lim_{n\to \infty}y^{(r_n)}(x)=b$. Then by \eqref{eq: e_i limit} and Lemma \ref{lem: spherical distance}, we have
$$
d_{\S^2}(e_i,b)=d_{\S^2}(e_i,x),\ \text{ for $1\le i\le 3$}.
$$
Hence $b=x$ and so $\displaystyle\lim_{r\to 0}y^{(r)}(x)=x$ for all $x\in \S^2$.

We claim that the convergence is uniform. Fix $x_0\in \S^2$ for any $\e>0$, by Lemma \ref{lem: spherical distance}, let $C$ be the constant in the lemma,  for any $x\in \S^2$ with $d_{\S^2}(x,x_0)<\e$, we have
\begin{equation*}
\begin{split}
    d_{\S^2}(y^{(r)}(x),x) \le & d_{\S^2}(y^{(r)}(x),y^{(r)}(x_0))+d_{\S^2}(y^{(r)}(x_0),x_0)+d_{\S^2}( x_0,x)\\
    \le & 2d_{\S^2}( x_0,x)+d_{\S^2}(y^{(r)}(x_0),x_0)+Cr^3\\
    \le & 3\e
    \end{split}
\end{equation*}
provided $r$ is small enough depending only on $x_0$ and $\e$. Since $\S^2$ is compact, this proves the claim that the convergence is  uniform.
\end{proof}

\section{Proofs of the main results}\label{sec: main results}

We now prove our main results. First, we embed $\H^3$ in the $\R^{3,1}$ so that
$\displaystyle\mathbb{H}^3=\{(x^0, x^1, x^2, x^3)\in \R^{3,1}: (x^0)^2-\sum_{i=1}^3 (x^i)^2=1, x^0>0\}$ and the fixed point $o$ in Section \ref{sec: emb} is mapped to the point $(1,0,0,0)$. 

\begin{proof}[Proof of Theorem \ref{thm: main}]
For $r$ small, let $X^{(r)}$ be the embedding  of $(S_r,\gamma_r)$ in $\H^3$ given by  Lemma \ref{lem: sphere}. With the notations as in section \ref{sec: emb}, when considered as an embedding of $(S_r,\gamma_r)$ in $\R^{3,1}$, $X^{(r)}$ is of the form
\begin{equation}\label{eq: embed in R^{3,1}}
 X^{(r)}(x)=(\cosh \sigma^{(r)}(x),\sinh\sigma^{(r)}(x)~y^{(r)}(x)).
\end{equation}
Now by Corollary \ref{cor: H_0-H}, Lemmas \ref{lem: embed} and \ref{lem: sphere}, we have as $r\to 0$,
\begin{equation}\label{eq: main estimates}
\begin{cases}
   H_0-H&= \frac {r^3}2\tr_{g_0}h+O(r^4),\\
     \cosh \sigma^{(r)}(x)&= \coth r +O(r^2)=\frac1r+o(1),\\
        \sinh \sigma^{(r)}(x)&= \frac1{\sinh r}+O(r^2)=\frac1r+o(1), \\
         y^{(r)}(x)&= x+o(1).
         \end{cases}
\end{equation}
As before, $O(r^k)$ represents a quantity with absolute value bounded by $Cr^k$ with $C$ being independent of $r$ and $x$. Moreover, by Definition \ref{def: g}, the volume form
\begin{equation}\label{eq: volume}
d\mu_{\gamma_r}=\lf(\frac1{\sinh^2r}+O(r^3)\ri)d\mu_{g_0}=(\frac{1}{r^2}+o(1))d\mu_{g_0}
\end{equation}
as $r\to 0$, where $d\mu_{g_0}$ is the volume form of the standard metric $g_0$.
By \eqref{eq: main estimates} and \eqref{eq: volume}, we have
\begin{equation*}
    \begin{split}
    &\int_{S_r}(H_0-H)X^{(r)}d\mu_{\gamma_r}\\
    =&\int_{S_r}(H_0-H)\lf(\cosh \sigma^{(r)}, \sinh \sigma^{(r)}~y^{(r)}\ri)  d\mu_{\gamma_r}\\
    =&\int_{\mathbb{S}^2}\left((\frac {r^3}2\tr_{g_0}h+O(r^4))
    \lf( \frac{1}{r}+o(1), \frac{x}{r}+o(\frac{1}{r})   \ri)(\frac1{r^2}+o(1))\right)d\mu_{g_0}\\
    =&\frac12 \lf( \int_{\mathbb{S}^2} \tr_{g_0}(h) d\mu_{g_0}, \int_{\mathbb{S}^2} \tr_{g_0}(h) xd\mu_{g_0}\ri)+o(1).
    \end{split}
\end{equation*}
>From this the theorem follows.
\end{proof}

\begin{proof}[Proof of Corollary \ref{cor: main}] Under the assumptions of the corollary, suppose $(M,g)$ is not isometric to $\H^3$, then by \cite[Theorem 2.5]{Wang}, or Theorem \ref{thm: wang},
$$
\int_{\S^2}\tr_{g_0}(h)d\mu_{g_0}> \lf|\int_{\S^2}\tr_{g_0}(h)xd\mu_{g_0}\ri|.
$$
Let $X^{(r)}$ be the isometric embedding of $(S_r,\gamma_r)$ as in Theorem \ref{thm: main}, then by the theorem there exists $\e>0$ such that if $r$ is small enough then for any future null vector $\eta=(1,\xi)$,
$$
\lf|\int_{S_r}(H_0-H)\langle X^{(r)},\eta\rangle_{\mathbb{R}^{3,1}} d\mu_{\gamma_r}\ri|_{\mathbb{R}^{3,1}}\le -\e .
$$
Hence $\int_{S_r}(H_0-H)  X^{(r)}  d\mu_{\gamma_r}$ is timelike and is future directed. From this and Remark \ref{rem}, it is easy to see that the corollary is true.
\end{proof}

\bibliographystyle{amsplain}

\end{document}